\chardef\@x10\chardef\@xv60
\def\tcitime{
\def\@time{%
  \@minute\time\@hour\@minute\divide\@hour\@xv
  \ifnum\@hour<\@x 0\fi\the\@hour:%
  \multiply\@hour\@xv\advance\@minute-\@hour
  \ifnum\@minute<\@x 0\fi\the\@minute
  }}%
\def\QCTOpt[#1]#2{%
  \def\QCTOptB{#1}
  \def\QCTOptA{#2}
}
\def\QCTNOpt#1{%
  \def\QCTOptA{#1}
  \let\QCTOptB\empty
}
\def\Qct{%
  \@ifnextchar[{%
    \QCTOpt}{\QCTNOpt}
}
\def\QCBOpt[#1]#2{%
  \def\QCBOptB{#1}
  \def\QCBOptA{#2}
}
\def\QCBNOpt#1{%
  \def\QCBOptA{#1}
  \let\QCBOptB\empty
}
\def\Qcb{%
  \@ifnextchar[{%
    \QCBOpt}{\QCBNOpt}
}
\def\PrepCapArgs{%
  \ifx\QCBOptA\empty
    \ifx\QCTOptA\empty
      {}%
    \else
      \ifx\QCTOptB\empty
        {\QCTOptA}%
      \else
        [\QCTOptB]{\QCTOptA}%
      \fi
    \fi
  \else
    \ifx\QCBOptA\empty
      {}%
    \else
      \ifx\QCBOptB\empty
        {\QCBOptA}%
      \else
        [\QCBOptB]{\QCBOptA}%
      \fi
    \fi
  \fi
}
\def\GRAPHICSPS#1{%
 \ifcase\GRAPHICSTYPE
   \special{ps: #1}%
 \or
   \special{language "PS", include "#1"}%
 \fi
}%
\def\graffile#1#2#3#4{%
    \leavevmode
    \raise -#4 \BOXTHEFRAME{%
        \hbox to #2{\raise #3\hbox to #2{\null #1\hfil}}}%
}%
\def\draftbox#1#2#3#4{%
 \leavevmode\raise -#4 \hbox{%
  \frame{\rlap{\protect\tiny #1}\hbox to #2%
   {\vrule height#3 width\z@ depth\z@\hfil}%
  }%
 }%
}%
\newif\ifwasdraft
\def\GRAPHIC#1#2#3#4#5{%
 \ifnum\draft=\@ne\draftbox{#2}{#3}{#4}{#5}%
  \else\graffile{#1}{#3}{#4}{#5}%
  \fi
 }%
\def\addtoLaTeXparams#1{%
    \edef\LaTeXparams{\LaTeXparams #1}}%
\newif\ifBoxFrame \BoxFramefalse
\newif\ifOverFrame \OverFramefalse
\newif\ifUnderFrame \UnderFramefalse
\def\BOXTHEFRAME#1{%
   \hbox{%
      \ifBoxFrame
         \frame{#1}%
      \else
         {#1}%
      \fi
   }%
}
\def\doFRAMEparams#1{\BoxFramefalse\OverFramefalse\UnderFramefalse\readFRAMEparams#1\end}%
\def\readFRAMEparams#1{%
 \ifx#1\end%
  \let\next=\relax
  \else
  \ifx#1i\dispkind=\z@\fi
  \ifx#1d\dispkind=\@ne\fi
  \ifx#1f\dispkind=\tw@\fi
  \ifx#1t\addtoLaTeXparams{t}\fi
  \ifx#1b\addtoLaTeXparams{b}\fi
  \ifx#1p\addtoLaTeXparams{p}\fi
  \ifx#1h\addtoLaTeXparams{h}\fi
  \ifx#1X\BoxFrametrue\fi
  \ifx#1O\OverFrametrue\fi
  \ifx#1U\UnderFrametrue\fi
  \ifx#1w
    \ifnum\draft=1\wasdrafttrue\else\wasdraftfalse\fi
    \draft=\@ne
  \fi
  \let\next=\readFRAMEparams
  \fi
 \next
 }%
\def\IFRAME#1#2#3#4#5#6{%
      \bgroup
      \let\QCTOptA\empty
      \let\QCTOptB\empty
      \let\QCBOptA\empty
      \let\QCBOptB\empty
      #6%
      \parindent=0pt%
      \leftskip=0pt
      \rightskip=0pt
      \setbox0 = \hbox{\QCBOptA}%
      \@tempdima = #1\relax
      \ifOverFrame
          \typeout{This is not implemented yet}%
          \show\HELP
      \else
         \ifdim\wd0>\@tempdima
            \advance\@tempdima by \@tempdima
            \ifdim\wd0 >\@tempdima
               \textwidth=\@tempdima
               \setbox1 =\vbox{%
                  \noindent\hbox to \@tempdima{\hfill\GRAPHIC{#5}{#4}{#1}{#2}{#3}\hfill}\\%
                  \noindent\hbox to \@tempdima{\parbox[b]{\@tempdima}{\QCBOptA}}%
               }%
               \wd1=\@tempdima
            \else
               \textwidth=\wd0
               \setbox1 =\vbox{%
                 \noindent\hbox to \wd0{\hfill\GRAPHIC{#5}{#4}{#1}{#2}{#3}\hfill}\\%
                 \noindent\hbox{\QCBOptA}%
               }%
               \wd1=\wd0
            \fi
         \else
            \ifdim\wd0>0pt
              \hsize=\@tempdima
              \setbox1 =\vbox{%
                \unskip\GRAPHIC{#5}{#4}{#1}{#2}{0pt}%
                \break
                \unskip\hbox to \@tempdima{\hfill \QCBOptA\hfill}%
              }%
              \wd1=\@tempdima
           \else
              \hsize=\@tempdima
              \setbox1 =\vbox{%
                \unskip\GRAPHIC{#5}{#4}{#1}{#2}{0pt}%
              }%
              \wd1=\@tempdima
           \fi
         \fi
         \@tempdimb=\ht1
         \advance\@tempdimb by \dp1
         \advance\@tempdimb by -#2%
         \advance\@tempdimb by #3%
         \leavevmode
         \raise -\@tempdimb \hbox{\box1}%
      \fi
      \egroup%
}%
\def\DFRAME#1#2#3#4#5{%
 \begin{center}
     \let\QCTOptA\empty
     \let\QCTOptB\empty
     \let\QCBOptA\empty
     \let\QCBOptB\empty
     \ifOverFrame 
        #5\QCTOptA\par
     \fi
     \GRAPHIC{#4}{#3}{#1}{#2}{\z@}
     \ifUnderFrame 
        \nobreak\par #5\QCBOptA
     \fi
 \end{center}%
 }%
\def\FFRAME#1#2#3#4#5#6#7{%
 \begin{figure}[#1]%
  \let\QCTOptA\empty
  \let\QCTOptB\empty
  \let\QCBOptA\empty
  \let\QCBOptB\empty
  \ifOverFrame
    #4
    \ifx\QCTOptA\empty
    \else
      \ifx\QCTOptB\empty
        \caption{\QCTOptA}%
      \else
        \caption[\QCTOptB]{\QCTOptA}%
      \fi
    \fi
    \ifUnderFrame\else
      \label{#5}%
    \fi
  \else
    \UnderFrametrue%
  \fi
  \begin{center}\GRAPHIC{#7}{#6}{#2}{#3}{\z@}\end{center}%
  \ifUnderFrame
    #4
    \ifx\QCBOptA\empty
      \caption{}%
    \else
      \ifx\QCBOptB\empty
        \caption{\QCBOptA}%
      \else
        \caption[\QCBOptB]{\QCBOptA}%
      \fi
    \fi
    \label{#5}%
  \fi
  \end{figure}%
 }%
\def\makeactives{
  \catcode`\"=\active
  \catcode`\;=\active
  \catcode`\:=\active
  \catcode`\'=\active
  \catcode`\~=\active
}
   \gdef\activesoff{%
      \def"{\string"}
      \def;{\string;}
      \def:{\string:}
      \def'{\string'}
      \def~{\string~}
    }
\def\FRAME#1#2#3#4#5#6#7#8{%
 \bgroup
 \@ifundefined{bbl@deactivate}{}{\activesoff}
 \ifnum\draft=\@ne
   \wasdrafttrue
 \else
   \wasdraftfalse%
 \fi
 \def\LaTeXparams{}%
 \dispkind=\z@
 \def\LaTeXparams{}%
 \doFRAMEparams{#1}%
 \ifnum\dispkind=\z@\IFRAME{#2}{#3}{#4}{#7}{#8}{#5}\else
  \ifnum\dispkind=\@ne\DFRAME{#2}{#3}{#7}{#8}{#5}\else
   \ifnum\dispkind=\tw@
    \edef\@tempa{\noexpand\FFRAME{\LaTeXparams}}%
    \@tempa{#2}{#3}{#5}{#6}{#7}{#8}%
    \fi
   \fi
  \fi
  \ifwasdraft\draft=1\else\draft=0\fi{}%
  \egroup
 }%
\def\TEXUX#1{"texux"}
\def\limfunc#1{\mathop{\rm #1}}%
\long\def\QQQ#1#2{%
     \long\expandafter\def\csname#1\endcsname{#2}}%
\long\def\QQA#1#2{}%
\def\QTR#1#2{{\csname#1\endcsname #2}}
\def\EXPAND#1[#2]#3{}%
\def\NOEXPAND#1[#2]#3{}%
\def\LaTeXparent#1{}%
\def\ChildStyles#1{}%
\def\ChildDefaults#1{}%
\def\QTagDef#1#2#3{}%
\def\QQfnmark#1{\footnotemark}
\def\makeatletter\input gnuindex.sty\makeatother\makeindex{\makeatletter\input gnuindex.sty\makeatother\makeindex}%
\def\initial#1{\bigbreak{\raggedright\large\bf #1}\kern 2\p@\penalty3000}}%
 \def\abstract{%
  \if@twocolumn
   \section*{Abstract (Not appropriate in this style!)}%
   \else \small 
   \begin{center}{\bf Abstract\vspace{-.5em}\vspace{\z@}}\end{center}%
   \quotation 
   \fi
  }%
   \def\registered{\relax\ifmmode{}\r@gistered
                    \else$\m@th\r@gistered$\fi}%
 \def\r@gistered{^{\ooalign
  {\hfil\raise.07ex\hbox{$\scriptstyle\rm\text{R}$}\hfil\crcr
  \mathhexbox20D}}}}{}%
\newdimen\theight
\def\Column{%
 \vadjust{\setbox\z@=\hbox{\scriptsize\quad\quad tcol}%
  \theight=\ht\z@\advance\theight by \dp\z@\advance\theight by \lineskip
  \kern -\theight \vbox to \theight{%
   \rightline{\rlap{\box\z@}}%
   \vss
   }%
  }%
 }%
\def\qed{%
 \ifhmode\unskip\nobreak\fi\ifmmode\ifinner\else\hskip5\p@\fi\fi
 \hbox{\hskip5\p@\vrule width4\p@ height6\p@ depth1.5\p@\hskip\p@}%
 }%
\def\miss{\hbox{\vrule height2\p@ width 2\p@ depth\z@}}%
\def\tcol#1{{\baselineskip=6\p@ \vcenter{#1}} \Column}  %
\def\newfmtname{LaTeX2e}
\def\chkcompat{%
   \if@compatibility
   \else
     \usepackage{latexsym}
   \fi
}
  \DeclareOldFontCommand{\rm}{\normalfont\rmfamily}{\mathrm}
  \DeclareOldFontCommand{\sf}{\normalfont\sffamily}{\mathsf}
  \DeclareOldFontCommand{\tt}{\normalfont\ttfamily}{\mathtt}
  \DeclareOldFontCommand{\bf}{\normalfont\bfseries}{\mathbf}
  \DeclareOldFontCommand{\it}{\normalfont\itshape}{\mathit}
  \DeclareOldFontCommand{\sl}{\normalfont\slshape}{\@nomath\sl}
  \DeclareOldFontCommand{\sc}{\normalfont\scshape}{\@nomath\sc}
\def\alpha{{\Greekmath 010B}}%
\def\beta{{\Greekmath 010C}}%
\def\gamma{{\Greekmath 010D}}%
\def\delta{{\Greekmath 010E}}%
\def\epsilon{{\Greekmath 010F}}%
\def\zeta{{\Greekmath 0110}}%
\def\eta{{\Greekmath 0111}}%
\def\theta{{\Greekmath 0112}}%
\def\iota{{\Greekmath 0113}}%
\def\kappa{{\Greekmath 0114}}%
\def\lambda{{\Greekmath 0115}}%
\def\mu{{\Greekmath 0116}}%
\def\nu{{\Greekmath 0117}}%
\def\xi{{\Greekmath 0118}}%
\def\pi{{\Greekmath 0119}}%
\def\rho{{\Greekmath 011A}}%
\def\sigma{{\Greekmath 011B}}%
\def\tau{{\Greekmath 011C}}%
\def\upsilon{{\Greekmath 011D}}%
\def\phi{{\Greekmath 011E}}%
\def\chi{{\Greekmath 011F}}%
\def\psi{{\Greekmath 0120}}%
\def\omega{{\Greekmath 0121}}%
\def\varepsilon{{\Greekmath 0122}}%
\def\vartheta{{\Greekmath 0123}}%
\def\varpi{{\Greekmath 0124}}%
\def\varrho{{\Greekmath 0125}}%
\def\varsigma{{\Greekmath 0126}}%
\def\varphi{{\Greekmath 0127}}%
\def\nabla{{\Greekmath 0272}}
\def\FindBoldGroup{%
   {\setbox0=\hbox{$\mathbf{x\global\edef\theboldgroup{\the\mathgroup}}$}}%
}
\def\Greekmath#1#2#3#4{%
    \if@compatibility
        \ifnum\mathgroup=\symbold
           \mathchoice{\mbox{\boldmath$\displaystyle\mathchar"#1#2#3#4$}}%
                      {\mbox{\boldmath$\textstyle\mathchar"#1#2#3#4$}}%
                      {\mbox{\boldmath$\scriptstyle\mathchar"#1#2#3#4$}}%
                      {\mbox{\boldmath$\scriptscriptstyle\mathchar"#1#2#3#4$}}%
        \else
           \mathchar"#1#2#3#4%
        \fi 
    \else 
        \FindBoldGroup
        \ifnum\mathgroup=\theboldgroup 
           \mathchoice{\mbox{\boldmath$\displaystyle\mathchar"#1#2#3#4$}}%
                      {\mbox{\boldmath$\textstyle\mathchar"#1#2#3#4$}}%
                      {\mbox{\boldmath$\scriptstyle\mathchar"#1#2#3#4$}}%
                      {\mbox{\boldmath$\scriptscriptstyle\mathchar"#1#2#3#4$}}%
        \else
           \mathchar"#1#2#3#4%
        \fi     	    
	  \fi}
\newif\ifGreekBold  \GreekBoldfalse
\let\SAVEPBF=\pbf
\def\pbf{\GreekBoldtrue\SAVEPBF}%
  \newcounter{equationnumber}  
  \def\mathletters{%
     \addtocounter{equation}{1}
     \edef\@currentlabel{\theequation}%
     \setcounter{equationnumber}{\c@equation}
     \setcounter{equation}{0}%
     \edef\theequation{\@currentlabel\noexpand\alph{equation}}%
  }
    \def\BibTeX{{\rm B\kern-.05em{\sc i\kern-.025em b}\kern-.08em
                 T\kern-.1667em\lower.7ex\hbox{E}\kern-.125emX}}}{}%
\def\AmS{{\protect\usefont{OMS}{cmsy}{m}{n}%
                A\kern-.1667em\lower.5ex\hbox{M}\kern-.125emS}}}{}%
\let\DOTSI\relax
\def\eat@#1{}%
\def\RIfM@{\relax\ifmmode}%
\def\FN@{\futurelet\next}%
\def\iint{\DOTSI\intno@\tw@\FN@\ints@}%
\def\iiint{\DOTSI\intno@\thr@@\FN@\ints@}%
\def\iiiint{\DOTSI\intno@4 \FN@\ints@}%
\def\idotsint{\DOTSI\intno@\z@\FN@\ints@}%
\def\ints@{\findlimits@\ints@@}%
\newif\iflimtoken@
\newif\iflimits@
\def\findlimits@{\limtoken@true\ifx\next\limits\limits@true
 \else\ifx\next\nolimits\limits@false\else
 \limtoken@false\ifx\ilimits@\nolimits\limits@false\else
 \ifinner\limits@false\else\limits@true\fi\fi\fi\fi}%
\def\multint@{\int\ifnum\intno@=\z@\intdots@                          
 \else\intkern@\fi                                                    
 \ifnum\intno@>\tw@\int\intkern@\fi                                   
 \ifnum\intno@>\thr@@\int\intkern@\fi                                 
 \int}
\def\multintlimits@{\intop\ifnum\intno@=\z@\intdots@\else\intkern@\fi
 \ifnum\intno@>\tw@\intop\intkern@\fi
 \ifnum\intno@>\thr@@\intop\intkern@\fi\intop}%
\def\intic@{%
    \mathchoice{\hskip.5em}{\hskip.4em}{\hskip.4em}{\hskip.4em}}%
\def\negintic@{\mathchoice
 {\hskip-.5em}{\hskip-.4em}{\hskip-.4em}{\hskip-.4em}}%
\def\ints@@{\iflimtoken@                                              
 \def\ints@@@{\iflimits@\negintic@
   \mathop{\intic@\multintlimits@}\limits                             
  \else\multint@\nolimits\fi                                          
  \eat@}
 \else                                                                
 \def\ints@@@{\iflimits@\negintic@
  \mathop{\intic@\multintlimits@}\limits\else
  \multint@\nolimits\fi}\fi\ints@@@}%
\def\intkern@{\mathchoice{\!\!\!}{\!\!}{\!\!}{\!\!}}%
\def\plaincdots@{\mathinner{\cdotp\cdotp\cdotp}}%
\def\intdots@{\mathchoice{\plaincdots@}%
 {{\cdotp}\mkern1.5mu{\cdotp}\mkern1.5mu{\cdotp}}%
 {{\cdotp}\mkern1mu{\cdotp}\mkern1mu{\cdotp}}%
 {{\cdotp}\mkern1mu{\cdotp}\mkern1mu{\cdotp}}}%
\def\RIfM@{\relax\protect\ifmmode}
\def\text{\RIfM@\expandafter\text@\else\expandafter\mbox\fi}
\let\nfss@text\text
\def\text@#1{\mathchoice
   {\textdef@\displaystyle\f@size{#1}}%
   {\textdef@\textstyle\tf@size{\firstchoice@false #1}}%
   {\textdef@\textstyle\sf@size{\firstchoice@false #1}}%
   {\textdef@\textstyle \ssf@size{\firstchoice@false #1}}%
   \glb@settings}
\def\textdef@#1#2#3{\hbox{{%
                    \everymath{#1}%
                    \let\f@size#2\selectfont
                    #3}}}
\newif\iffirstchoice@
\def\Let@{\relax\iffalse{\fi\let\\=\cr\iffalse}\fi}%
\def\vspace@{\def\vspace##1{\crcr\noalign{\vskip##1\relax}}}%
\def\multilimits@{\bgroup\vspace@\Let@
 \baselineskip\fontdimen10 \scriptfont\tw@
 \advance\baselineskip\fontdimen12 \scriptfont\tw@
 \lineskip\thr@@\fontdimen8 \scriptfont\thr@@
 \lineskiplimit\lineskip
 \vbox\bgroup\ialign\bgroup\hfil$\m@th\scriptstyle{##}$\hfil\crcr}%
\def\Sb{_\multilimits@}%
\def\endSb{\crcr\egroup\egroup\egroup}%
\def\Sp{^\multilimits@}%
\newdimen\ex@
\def\rightarrowfill@#1{$#1\m@th\mathord-\mkern-6mu\cleaders
 \hbox{$#1\mkern-2mu\mathord-\mkern-2mu$}\hfill
 \mkern-6mu\mathord\rightarrow$}%
\def\leftarrowfill@#1{$#1\m@th\mathord\leftarrow\mkern-6mu\cleaders
 \hbox{$#1\mkern-2mu\mathord-\mkern-2mu$}\hfill\mkern-6mu\mathord-$}%
\def\leftrightarrowfill@#1{$#1\m@th\mathord\leftarrow
\mkern-6mu\cleaders
 \hbox{$#1\mkern-2mu\mathord-\mkern-2mu$}\hfill
 \mkern-6mu\mathord\rightarrow$}%
\def\overrightarrow{\mathpalette\overrightarrow@}%
\def\overrightarrow@#1#2{\vbox{\ialign{##\crcr\rightarrowfill@#1\crcr
 \noalign{\kern-\ex@\nointerlineskip}$\m@th\hfil#1#2\hfil$\crcr}}}%
\def\overleftarrow{\mathpalette\overleftarrow@}%
\def\overleftarrow@#1#2{\vbox{\ialign{##\crcr\leftarrowfill@#1\crcr
 \noalign{\kern-\ex@\nointerlineskip}$\m@th\hfil#1#2\hfil$\crcr}}}%
\def\overleftrightarrow{\mathpalette\overleftrightarrow@}%
\def\overleftrightarrow@#1#2{\vbox{\ialign{##\crcr
   \leftrightarrowfill@#1\crcr
 \noalign{\kern-\ex@\nointerlineskip}$\m@th\hfil#1#2\hfil$\crcr}}}%
\def\underrightarrow{\mathpalette\underrightarrow@}%
\def\underrightarrow@#1#2{\vtop{\ialign{##\crcr$\m@th\hfil#1#2\hfil
  $\crcr\noalign{\nointerlineskip}\rightarrowfill@#1\crcr}}}%
\def\underleftarrow{\mathpalette\underleftarrow@}%
\def\underleftarrow@#1#2{\vtop{\ialign{##\crcr$\m@th\hfil#1#2\hfil
  $\crcr\noalign{\nointerlineskip}\leftarrowfill@#1\crcr}}}%
\def\underleftrightarrow{\mathpalette\underleftrightarrow@}%
\def\underleftrightarrow@#1#2{\vtop{\ialign{##\crcr$\m@th
  \hfil#1#2\hfil$\crcr
 \noalign{\nointerlineskip}\leftrightarrowfill@#1\crcr}}}%
\def\qopnamewl@#1{\mathop{\operator@font#1}\nlimits@}
\let\nlimits@\displaylimits
\def\setboxz@h{\setbox\z@\hbox}
\def\varlim@#1#2{\mathop{\vtop{\ialign{##\crcr
 \hfil$#1\m@th\operator@font lim$\hfil\crcr
 \noalign{\nointerlineskip}#2#1\crcr
 \noalign{\nointerlineskip\kern-\ex@}\crcr}}}}
 \def\rightarrowfill@#1{\m@th\setboxz@h{$#1-$}\ht\z@\z@
  $#1\copy\z@\mkern-6mu\cleaders
  \hbox{$#1\mkern-2mu\box\z@\mkern-2mu$}\hfill
  \mkern-6mu\mathord\rightarrow$}
\def\leftarrowfill@#1{\m@th\setboxz@h{$#1-$}\ht\z@\z@
  $#1\mathord\leftarrow\mkern-6mu\cleaders
  \hbox{$#1\mkern-2mu\copy\z@\mkern-2mu$}\hfill
  \mkern-6mu\box\z@$}
\def\projlim{\qopnamewl@{proj\,lim}}
\def\injlim{\qopnamewl@{inj\,lim}}
\def\varinjlim{\mathpalette\varlim@\rightarrowfill@}
\def\varprojlim{\mathpalette\varlim@\leftarrowfill@}
\def\varliminf{\mathpalette\varliminf@{}}
\def\varliminf@#1{\mathop{\underline{\vrule\@depth.2\ex@\@width\z@
   \hbox{$#1\m@th\operator@font lim$}}}}
\def\varlimsup{\mathpalette\varlimsup@{}}
\def\varlimsup@#1{\mathop{\overline
  {\hbox{$#1\m@th\operator@font lim$}}}}
\def\align{\@verbatim \frenchspacing\@vobeyspaces \@alignverbatim
You are using the "align" environment in a style in which it is not defined.}
\let\csname endalign*\endcsname =\endtrivlist
\def\alignat{\@verbatim \frenchspacing\@vobeyspaces \@alignatverbatim
You are using the "alignat" environment in a style in which it is not defined.}
\let\csname endalignat*\endcsname =\endtrivlist
\def\xalignat{\@verbatim \frenchspacing\@vobeyspaces \@xalignatverbatim
You are using the "xalignat" environment in a style in which it is not defined.}
\let\csname endxalignat*\endcsname =\endtrivlist
\def\gather{\@verbatim \frenchspacing\@vobeyspaces \@gatherverbatim
You are using the "gather" environment in a style in which it is not defined.}
\let\csname endgather*\endcsname =\endtrivlist
\def\multiline{\@verbatim \frenchspacing\@vobeyspaces \@multilineverbatim
You are using the "multiline" environment in a style in which it is not defined.}
\let\csname endmultiline*\endcsname =\endtrivlist
\def\arrax{\@verbatim \frenchspacing\@vobeyspaces \@arraxverbatim
You are using a type of "array" construct that is only allowed in AmS-LaTeX.}
\def\tabulax{\@verbatim \frenchspacing\@vobeyspaces \@tabulaxverbatim
You are using a type of "tabular" construct that is only allowed in AmS-LaTeX.}
\let\csname endarrax*\endcsname =\endtrivlist
\let\csname endtabulax*\endcsname =\endtrivlist
\def\@@eqncr{\let\@tempa\relax
    \ifcase\@eqcnt \def\@tempa{& & &}\or \def\@tempa{& &}%
      \else \def\@tempa{&}\fi
     \@tempa
     \if@eqnsw
        \iftag@
           \@taggnum
        \else
           \@eqnnum\stepcounter{equation}%
        \fi
     \fi
     \global\tag@false
     \global\@eqnswtrue
     \global\@eqcnt\z@\cr}
 \def\endequation{%
     \ifmmode\ifinner 
      \iftag@
        \addtocounter{equation}{-1} 
        $\hfil
           \displaywidth\linewidth\@taggnum\egroup \endtrivlist
        \global\tag@false
        \global\@ignoretrue   
      \else
        $\hfil
           \displaywidth\linewidth\@eqnnum\egroup \endtrivlist
        \global\tag@false
        \global\@ignoretrue 
      \fi
     \else   
      \iftag@
        \addtocounter{equation}{-1} 
        \eqno \hbox{\@taggnum}
        \global\tag@false%
        $$\global\@ignoretrue
      \else
        \eqno \hbox{\@eqnnum}
        $$\global\@ignoretrue
      \fi
     \fi\fi
 } 
 \newif\iftag@ \tag@false
 \def\tag{\@ifnextchar*{\@tagstar}{\@tag}}
 \def\@tag#1{%
     \global\tag@true
     \global\def\@taggnum{(#1)}}
 \def\@tagstar*#1{%
     \global\tag@true
     \global\def\@taggnum{#1}%
}
\theoremstyle{definition}
\theoremstyle{remark}
\numberwithin{equation}{section}
\begin{document}
\title[points of continuity of quasiconvex functions]{points of continuity of quasiconvex functions on topological vector spaces}
\author{Patrick J. Rabier}
\address{Department of mathematics, University of Pittsburgh, Pittsburgh, PA 15260}
\email{rabier@imap.pitt.edu}
\subjclass{26B05, 52A41, 54E52}
\keywords{Quasiconvex function, topological essential extremum, point of continuity,
point of discontinuity, Baire category, quasicontinuity}
\maketitle

\begin{abstract}
We give necessary and sufficient conditions for a real-valued quasiconvex
function $f$ on a Baire topological vector space $X$ (in particular, Banach
or Fr\'{e}chet space) to be continuous at the points of a residual subset of 
$X.$ These conditions involve only simple topological properties of the
lower level sets of $f.$ A main ingredient consists in taking advantage of a
remarkable property of quasiconvex functions relative to a topological
variant of essential extrema on the open subsets of $X.$ One application is
that if $f$ is quasiconvex and continuous at the points of a residual subset
of $X,$ then with a single possible exception, $f^{-1}(\alpha )$ is nowhere
dense or has nonempty interior, as is the case for everywhere continuous
functions. As a barely off-key complement, we also prove that every usc
quasiconvex function is quasicontinuous in the (topological) sense of
Kempisty since this interesting property does not seem to have been noticed
before.
\end{abstract}

\section{Introduction\label{intro}}

If $X$ is a vector space, a function $f:X\rightarrow \overline{\Bbb{R}}$ is
quasiconvex if its lower level sets 
\begin{equation}
F_{\alpha }:=\{x\in X:f(x)<\alpha \},  \label{1}
\end{equation}
are convex for every $\alpha \in \Bbb{R}.$ This is equivalent to the
convexity of the level sets 
\begin{equation}
F_{\alpha }^{\prime }:=\{x\in X:f(x)\leq \alpha \}  \label{2}
\end{equation}
and also equivalent to assuming that $f(\lambda x+(1-\lambda )y)\leq \max
\{f(x),f(y)\}$ for every $x,y\in X$ and $\lambda \in [0,1].$

It is a well known result of Crouzeix \cite{Cr81} (see also \cite{ChCr87})
that a real-valued (i.e., finite) quasiconvex function $f$ on $\Bbb{R}^{N}$
is a.e. Fr\'{e}chet differentiable and therefore a.e. continuous. That no
property of this sort can generally be true when $X$ is an infinite
dimensional topological vector space (tvs) follows at once from the
existence of discontinuous linear forms on $X,$ which are nowhere continuous.

The features of the linear case extend to convex functions, which can only
be nowhere or everywhere continuous, with the latter happening if and only
if the function is bounded above on some nonempty open subset (\cite[p. 92]
{Bo53}, \cite{GeKo89}). Little seems to be known about the continuity of
quasiconvex functions when $\dim X=\infty .$ We are only aware of an
unpublished report of Hadjisavvas \cite{Ha94}, where another finite
dimensional result of Crouzeix \cite{Cr81b} is generalized; see \cite{AuDa00}
for more details.

The main goal of this paper is to show that in spite of sharp differences
with the finite dimensional or convex case, there are still simple necessary
and sufficient conditions for a real-valued quasiconvex function on a Baire
tvs $X$ to be continuous at the points of a residual (and therefore dense)
subset of $X,$ i.e., discontinuous only at the points of a subset of (Baire)
first category in $X.$ These conditions involve only basic topological
properties of the sublevel sets (\ref{1}) and (\ref{2}).

Above, ``Baire tvs'' refers to the fact that $X$ is a tvs and that no
nonempty open subset of $X$ can be covered by countably many closed subsets
with empty interior. Equivalently, no nonempty open subset of $X$ is of
first category. First and second categories are always understood relative
to the whole space $X.$

Since every complete metric space is a Baire space, the class of Baire tvs
includes Banach or Fr\'{e}chet spaces, but there are many other
non-metrizable examples in the literature. Local convexity is not required
but significant applications may be limited without it. For example, if $%
X=L^{p}(0,1)$ with $0<p<1,$ the only open convex subsets are $X$ and $%
\emptyset $ \cite[p. 116]{Co85}, so that the quasiconvex functions on $X$
continuous at the points of a residual subset are actually constant on a
residual subset. This follows at once from the criterion in the next
paragraph.

One of the main results, given in Corollary \ref{cor10} along with other
equivalent but slightly more technical statements, is that the set of points
of discontinuity of a quasiconvex function $f:X\rightarrow \Bbb{R}$ is of
first category if and only if, with the notations (\ref{1}) and (\ref{2}), $%
F_{\alpha }$ is nowhere dense whenever $\overset{\circ }{F_{\alpha }^{\prime
}}=\emptyset .$ While the necessity is a little tricky but quickly proved,
the path to sufficiency is longer and more meandering.

Before laying out our strategy to prove sufficiency, we point out that the
above necessary and sufficient condition is satisfied (as it must be) if $f$
is usc or lsc, or when $X=\Bbb{R}^{N}.$ The latter yields a ``category''
variant of Crouzeix's theorem without the differentiability part, which
however can be given a straightforward direct proof (Remark \ref{rm2}).

A main ingredient that will come into play at the onset is the concept of
``topological'' essential extremum (Section \ref{topological}), which mimics
the familiar notion from measure theory. More specifically, if $\mathcal{T}$
denotes the topology of $X$ and $U\subset X$ is an open\footnote{%
The openness of $U$ is not important, but the definition will not be needed
when $U$ is not open.} subset, we define 
\begin{equation*}
\mathcal{T}\limfunc{ess}\sup_{U}f:=\inf \{\alpha \in \Bbb{R}:\{f_{|U}>\alpha
\}\text{ is of first category}\}
\end{equation*}
and, next, $\mathcal{T}\limfunc{ess}\inf_{U}f:=-\mathcal{T}\limfunc{ess}%
\sup_{U}(-f),$ which of course is also given by a similar formula.

These $\mathcal{T}$-essential extrema are discussed in the next section,
when $X$ is a topological space. Although the aim there is just the proof of
the elementary but basic Lemma \ref{lm1}, it seemed instructive to show that
they are fairly versatile since special cases not only include the pointwise
extrema, but also the classical essential extrema of measure theory, at
least in the case of the Lebesgue measure.

Evidently, such a simple-minded and natural definition must have come to
mind only moments after the discovery of the Baire categories\footnote{%
Figuratively speaking, since Baire category (1899) predates measure theory
(1901).}, but it appears to have remained rather confidential. A closely
related idea is explored at some length in two 1963 papers by Semadeni \cite
{Se63}, \cite{Se63b}, for a very specific and different purpose. Curiously,
this idea does not seem to resurface in other issues and we were unable to
find any subsequent record of its use.

In this paper, $\mathcal{T}$-essential extrema are pervasive because $%
\mathcal{T}\limfunc{ess}\inf_{X}f$ is a crucial value in the problem under
investigation, but their importance is primarily due to the combination of
two features. First, by design, they are unchanged after modification of the
function on a subset of first category. Second, quasiconvex functions have a
remarkable property relative to these extrema: If $f$ is quasiconvex (and $U$
is open), then $\mathcal{T}\limfunc{ess}\sup_{U}f=$ $\sup_{U}f,$ the \emph{\
pointwise} supremum of $f$ on $U,$ while $\mathcal{T}\limfunc{ess}%
\inf_{U}f=\inf_{U}f,$ the \emph{pointwise} infimum of $f$ on $U,$ provided
that equality holds when $U=X$ (Theorem \ref{th4}). These properties are
typical of lower and upper semicontinuous functions, respectively, but for
quasiconvex functions, they hold without any semicontinuity requirement.

Next, if $f$ and $g$ are quasiconvex functions that agree on a residual
subset of $X$ (equivalent functions) and if $x$ is a point of continuity of $%
g,$ Theorem \ref{th4} is instrumental in discovering and proving a necessary
and sufficient condition for $x$ to be a point of continuity of $f$ as well
(Theorem \ref{th6}). A corollary yields a necessary and sufficient condition
for the set of points of discontinuity of $f$ to be of first category if
this is true of the set of points of discontinuity of $g$ (Corollary \ref
{cor7}).

In turn, this can be used to find a sharp sufficient condition for the set
of points of discontinuity of a quasiconvex function $f$ to be of first
category, but now without having to know an equivalent quasiconvex function
with that property. Indeed, under suitable assumptions about $f,$ the usc
hull of $f$ is (quasiconvex and) real-valued and equivalent to $f.$ Since
the set of points of discontinuity of a semicontinuous function is of first
category, the desired condition follows from the sufficiency part of
Corollary \ref{cor7} (Theorem \ref{th9}). Ultimately, this leads to the
sufficiency of the conditions listed in Corollary \ref{cor10} and the main
goal of the paper is achieved. For a rather surprising by-product, see
Corollary \ref{cor11}.

In Section \ref{applications}, we give two simple sufficient (not necessary)
conditions for the set of points of discontinuity of a quasiconvex function
to be of first category. One of them (Theorem \ref{th12}) is used to show
that if the set of points of discontinuity of a quasiconvex function $f$ is
of first category, then with only one possible exception, $f^{-1}(\alpha )$
is either nowhere dense or has nonempty interior (Theorem \ref{th14}). Even
if $f$ is usc or lsc, the proof requires using Theorem \ref{th12} in the
nontrivial case when the function of interest is not semicontinuous. We also
show that $f$ and its lsc hull have the same points of continuity (Theorem 
\ref{th15}), which is false when the set of points of discontinuity of $f$
is of second category.

The last section is devoted to two complements of independent interest. We
prove that the sets $\overset{\circ }{F_{\alpha }}$ and $\overset{\circ }{
F_{\alpha }^{\prime }}$ (see (\ref{1}) and (\ref{2})), which play a key role
in the paper, are unchanged when the quasiconvex function $f$ is replaced by
an equivalent quasiconvex function (Theorem \ref{th17}). This clarifies the
connection between various results in the preceding sections.

Lastly, we prove that every usc quasiconvex function is quasicontinuous in
the (classical) sense of Kempisty. This was suggested by some technical
conditions related to the proof of Theorem \ref{th4} and seems to have so
far remained unnoticed, even when $X=\Bbb{R}^{N}.$

\section{Topological essential extrema\label{topological}}

As mentioned in the Introduction, if $X$ is a topological space with
topology $\mathcal{T},$ $f:X\rightarrow \overline{\Bbb{R}}$ is a given
function and $U\subset X$ is open and nonempty, we set 
\begin{multline}
\mathcal{T}\limfunc{ess}\sup_{U}f:=\inf \{\alpha \in \Bbb{R}:\{f_{|U}>\alpha
\}\text{ is of first category}\}=  \label{3} \\
\sup \{\alpha \in \Bbb{R}:\{f_{|U}>\alpha \}\text{ is of second category}\}.
\end{multline}
The second equality in (\ref{3}) follows from the fact that the collection
of upper level sets $\left( \{f_{|U}>\alpha \}\right) _{\alpha \in \Bbb{R}}$
is nonincreasing and, if convenient, the level sets $\{f_{|U}>\alpha \}$ can
be replaced by $\{f_{|U}\geq \alpha \}$ without prejudice. If $U=\emptyset ,$
we set $\mathcal{T}\limfunc{ess}\sup_{\emptyset }f:=-\infty .$ It is always
true that $\mathcal{T}\limfunc{ess}\sup_{U}f\leq \sup_{U}f$ and, if $X$ is a
Baire space and $f$ is continuous, then $\mathcal{T}\limfunc{ess}
\sup_{U}f=\sup_{U}f.$

When $\mathcal{T}$ is the discrete topology on a set $X,$ (\ref{3})
coincides with $\sup_{U}f$ since $\emptyset $ is the only subset of first
category. It is less obvious but shown in the example below that the
definition (\ref{3}) also incorporates the classical essential supremum of a
Lebesgue measurable function as a special case.

\begin{example}
\label{ex1}Let $X=\Bbb{R}^{N}$ and $\mathcal{T}=\mathcal{D},$ the density
topology, whose open subsets are $\emptyset $ and the Lebesgue measurable
subsets with density $1$ at each of their points (\cite{GoNeNi61}, \cite
{HaPa52}, \cite{LuMaZa86}). With this topology, $\Bbb{R}^{N}$ is not a
metric space ($\mathcal{D}$ is not first countable) and not even a tvs, but
it is a Baire space. If $f$ is measurable and $S\subset \Bbb{R}^{N}$ is a
measurable subset with density interior $U,$ then $\mathcal{D}\limfunc{ ess}%
\sup_{U}f=\limfunc{ess}\sup_{S}f$: First, $\limfunc{ ess}\sup_{S}f=\limfunc{
ess}\sup_{U}f$ because $U$ is the set of points at which $S$ has density $1,$
so that $S\backslash U$ is a null set (set of measure zero) by the Lebesgue
density theorem. Next, $\limfunc{ess}\sup_{U}f=\mathcal{D}\limfunc{ ess}%
\sup_{U}f$ because a subset is of first category for $\mathcal{\ D}$ if and
only if it is a null set. \newline
In addition, (\ref{3}) makes sense when $f$ is not measurable. For example,
if $A\subset X$ is not measurable and $f=\chi _{A},$ then $\mathcal{D}%
\limfunc{ess}\sup_{\Bbb{R}^{N}}\chi _{A}=1$ (if $A$ were of first category
for $\mathcal{D}$, it would be a null set and hence measurable). As odd as
it might seem, there is actually nothing new here: Although rarely if ever
used, the definition of $\limfunc{ess}\sup_{U}f$ makes sense for
non-measurable $f$ by using the Lebesgue outer measure $\mu _{N}^{*},$ viz., 
$\limfunc{ess}\sup_{U}f=\inf \{\alpha \in \Bbb{R}:\mu
_{N}^{*}(\{f_{|U}>\alpha \})=0\}.$ Since $\mu _{N}^{*}$ vanishes exactly on
null sets and the null sets are the subsets of first category for $\mathcal{D%
}$, this is just $\mathcal{D}\limfunc{ess}\sup_{U}f.$
\end{example}

Naturally, we can also define $\mathcal{T}\limfunc{ess}\inf_{U}f=-\mathcal{T}%
\limfunc{ess}\sup_{U}(-f),$ that is, 
\begin{multline}
\mathcal{T}\limfunc{ess}\inf_{U}f:=\sup \{\alpha \in \Bbb{R}:\{f_{|U}<\alpha
\}\text{ is of first category}\}=  \label{4} \\
\inf \{\alpha \in \Bbb{R}:\{f_{|U}<\alpha \}\text{ is of second category}\},
\end{multline}
when $U$ is open and nonempty and $\mathcal{T}\limfunc{ess}\inf_{\emptyset
}f:=\infty .$ Once again, the level sets $\{f_{|U}\leq \alpha \}$ can be
used instead of $\{f_{|U}<\alpha \}$ and $\inf_{U}f\leq \mathcal{T}\limfunc{
ess}\inf_{U}f,$ with equality (for instance) when $X$ is a Baire space and $%
f $ is continuous.

If $f$ and $g$ are two extended real valued functions on $X,$ it is natural
to define the $\mathcal{T}$-equivalence of $f$ and $g,f\sim _{\mathcal{T}}g$
for short, by 
\begin{equation}
f\sim _{\mathcal{T}}g\text{ if }f=g\text{ on a residual subset of }X,
\label{5}
\end{equation}
where, as usual, a residual subset is the complement of a set of first
category. Thus, $f\sim _{\mathcal{T}}g$ if and only if $f\neq g$ on a set of
first category. Since the union of two sets of first category is of first
category, 
\begin{equation}
f\sim _{\mathcal{T}}g\text{ }\Rightarrow \mathcal{T}\limfunc{ess}\sup_{U}f=%
\mathcal{T}\limfunc{ess}\sup_{U}g\text{ and }\mathcal{T}\limfunc{ess}
\inf_{U}f=\mathcal{T}\limfunc{ess}\inf_{U}g,  \label{6}
\end{equation}
for every open subset $U\subset X.$

The question whether the pointwise and $\mathcal{T}$-essential extrema of a
real-valued function coincide on every open subset will be of central
importance in the next section. In turn, this will hinge on the following
simple lemma.

\begin{lemma}
\label{lm1}Let $X$ be a topological space with topology $\mathcal{T}.$ For
every function $f:X\rightarrow \Bbb{R},$ the following statements are
equivalent.\newline
(i) $\sup_{U}f=\mathcal{T}\limfunc{ess}\sup_{U}f$ for every open subset $%
U\subset X.$ \newline
(ii) For every $x_{0}\in X,$ every open subset $U\subset X$ containing $%
x_{0} $ and every $\varepsilon >0,$ the set $\{x\in
U:f(x)>f(x_{0})-\varepsilon \}$ is of second category.\newline
Likewise, the following statements are equivalent: \newline
(i') $\inf_{U}f=\mathcal{T}\limfunc{ess}\inf_{U}f$ for every open subset $%
U\subset X.$ \newline
(ii') For every $x_{0}\in X,$ every open subset $U\subset X$ containing $%
x_{0}$ and every $\varepsilon >0,$ the set $\{x\in
U:f(x)<f(x_{0})+\varepsilon \}$ is of second category.
\end{lemma}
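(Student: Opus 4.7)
The plan is to unwind the definitions (\ref{3}) and (\ref{4}) directly; everything hinges on a single monotonicity observation about the $\mathcal{T}$-essential supremum. Specifically, I would first isolate from (\ref{3}) the fact that, because the family $(\{f_{|U}>\beta\})_{\beta\in\Bbb{R}}$ is nonincreasing in $\beta$, one has $\mathcal{T}\limfunc{ess}\sup_{U}f>\beta$ if and only if $\{f_{|U}>\beta\}$ is of second category (the ``only if'' coming from the $\inf$ form and the ``if'' from the $\sup$ form of (\ref{3})).

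Given this, (i) $\Rightarrow$ (ii) is immediate: for $x_{0}$ in an open $U$ and $\varepsilon >0$, (i) yields $\mathcal{T}\limfunc{ess}\sup_{U}f=\sup_{U}f\geq f(x_{0})>f(x_{0})-\varepsilon$, and the observation forces $\{x\in U:f(x)>f(x_{0})-\varepsilon \}$ to be of second category. For the converse (ii) $\Rightarrow $ (i), since $\mathcal{T}\limfunc{ess}\sup_{U}f\leq \sup_{U}f$ always holds (with both sides $-\infty$ when $U=\emptyset$), only the reverse inequality is at stake: for any nonempty open $U$ and any $x_{0}\in U$, (ii) says $\{f_{|U}>f(x_{0})-\varepsilon \}$ is of second category for every $\varepsilon >0$, hence $\mathcal{T}\limfunc{ess}\sup_{U}f\geq f(x_{0})-\varepsilon$; letting $\varepsilon \to 0^{+}$ and then taking the supremum over $x_{0}\in U$ gives $\mathcal{T}\limfunc{ess}\sup_{U}f\geq \sup_{U}f$.

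Finally, (i') $\Leftrightarrow $ (ii') will follow by applying the already-proved equivalence to $-f$, via $\mathcal{T}\limfunc{ess}\inf_{U}f=-\mathcal{T}\limfunc{ess}\sup_{U}(-f)$ and $\inf_{U}f=-\sup_{U}(-f)$; these identities turn (ii') for $f$ into (ii) for $-f$. There is no real obstacle here: the argument is pure definition-chasing, and the only point requiring any attention is the monotonicity observation linking $\mathcal{T}\limfunc{ess}\sup$ to the second-category status of the upper level sets.
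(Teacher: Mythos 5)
Your argument is correct and is essentially the paper's own proof, merely phrased directly rather than by contradiction: both come down to reading off, from the two forms of the definition (\ref{3}), that $\mathcal{T}\limfunc{ess}\sup_{U}f>\beta $ forces $\{f_{|U}>\beta \}$ to be of second category, and that the latter forces $\mathcal{T}\limfunc{ess}\sup_{U}f\geq \beta .$ (One small quibble: the ``if'' half of your auxiliary equivalence, with \emph{strict} inequality, does not follow from the $\sup $ form of (\ref{3}) alone but needs $\{f_{|U}>\beta \}=\cup _{n}\{f_{|U}>\beta +1/n\}$ together with the stability of first category under countable unions; since your proof only ever invokes the weak inequality $\geq ,$ nothing is affected.)
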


\begin{proof}
(i) $\Rightarrow $ (ii). Suppose that (i) holds and, by contradiction,
assume that there are $x_{0}\in X,$ an open subset $U\subset X$ containing $%
x_{0}$ and some $\varepsilon >0$ such that $\{x\in
U:f(x)>f(x_{0})-\varepsilon \}$ is of first category. Then, $\mathcal{T}%
\limfunc{ess}\sup_{U}f\leq f(x_{0})-\varepsilon <f(x_{0})\leq \sup_{U}f,$
which contradicts (i).

(ii) $\Rightarrow $ (i). Let $U\subset X$ be an open subset. We argue by
contradiction, thereby assuming that $\sup_{U}f>\mathcal{T}\limfunc{ ess}%
\sup_{U}f.$ If so, $U$ is not empty (otherwise, both suprema are $-\infty $
) and $\mathcal{T}\limfunc{ess}\sup_{U}f<\infty .$ Thus, the assumption $%
\sup_{U}f>\mathcal{T}\limfunc{ess}\sup_{U}f$ implies the existence of $%
x_{0}\in X$ such that $\mathcal{T}\limfunc{ess}\sup_{U}f<f(x_{0})\leq
\sup_{U}f.$ Choose $\varepsilon >0$ small enough that $\mathcal{T}\limfunc{%
ess}\sup_{U}f<f(x_{0})-\varepsilon .$ By (ii), $\{x\in
U:f(x)>f(x_{0})-\varepsilon \}$ is of second category, so that $\mathcal{T}%
\limfunc{ess}\sup_{U}f\geq f(x_{0})-\varepsilon ,$ which is a contradiction.

That (i') $\Leftrightarrow $ (ii') follows by replacing $f$ by $-f$ above.
\end{proof}

\section{Extrema of quasiconvex functions on open subsets\label{extrema}}

From this point on, $X$ is a Baire tvs with topology $\mathcal{T}.$ We shall
show that under a simple necessary and sufficient condition on the
quasiconvex function $f,$ the $\mathcal{T}$-essential extrema of $f$ on any
open subset $U\subset X$ coincide with its corresponding pointwise extrema
on $U.$ This will follow from Lemma \ref{lm1} after we expose some
properties of convex sets relative to Baire category (Lemma \ref{lm3} below).

Convex subsets of infinite dimensional spaces have less pleasant features
than their finite dimensional counterparts. However, an important one
remains unchanged, which is spelled out in the following remark.

\begin{remark}
\label{rm1}If $C\subset X$ is a convex subset with $\overset{\circ }{C}\neq %
\emptyset ,$ then $\overset{\circ }{C}$ is convex, the closures of $C$ and $%
\overset{\circ }{C}$ are the same (i.e., $\overline{C}$) and the interior of 
$C$ and $\overline{C}$ are the same (i.e., $\overset{\circ }{C}$) 
\cite[p.105]{Be74}. In particular, $\partial C=\partial \overset{\circ }{C}=%
\partial \overline{C}.$ When $\dim X=\infty ,$ all these properties break
down when $\overset{\circ }{C}=\emptyset .$ However, $\overline{C}$ is
always convex \cite[p. 103]{Be74}.
\end{remark}

The next lemma will be used several times, including in the proof of part
(ii) of Lemma \ref{lm3} below.

\begin{lemma}
\label{lm2}Let $U$ and $C$ be subsets of $X$ with $U$ open and $C$ convex.
If $A\subset X$ is of first category and $U\backslash A\subset C,$ then $%
U\subset C.$
\end{lemma}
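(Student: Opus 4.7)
The plan is to show, for an arbitrary $x\in U$, that $x$ can be written as a midpoint of two points lying in $U\setminus A\subset C$, whence $x\in C$ by convexity. The mechanism for producing such a pair is a Baire-category argument applied to the reflection $\phi:X\to X$ defined by $\phi(y)=2x-y$.

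First, I would record that $\phi$ is a homeomorphism of the tvs $X$ (it is affine and continuous with continuous inverse $\phi$ itself), so it sends nowhere-dense sets to nowhere-dense sets and therefore maps first-category subsets to first-category subsets. In particular, $A\cup\phi(A)$ is of first category in $X$. Next, I would form the open set $V:=U\cap\phi(U)=U\cap(2x-U)$; because $\phi(x)=x\in U$, the point $x$ belongs to $V$, so $V$ is a nonempty open subset of the Baire tvs $X$.

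Since $X$ is Baire, no nonempty open subset is of first category, so $V$ is of second category. Consequently $V\setminus(A\cup\phi(A))$ is nonempty (in fact residual in $V$), and I can pick a point $y$ in it. By construction $y\in U\setminus A$, and also $\phi(y)=2x-y\in U$ with $\phi(y)\notin A$, hence $2x-y\in U\setminus A$. Both $y$ and $2x-y$ therefore belong to $C$ by the hypothesis $U\setminus A\subset C$, and the convexity of $C$ gives
\[
x=\tfrac12\,y+\tfrac12\,(2x-y)\in C,
\]
as required.

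The only substantive point is ensuring that $V$ is genuinely of second category, which is where the Baire tvs assumption is used; the rest is routine. No topological vector space structure beyond continuity of the translation-and-reflection map is needed, and no local convexity is invoked.
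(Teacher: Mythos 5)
Your proof is correct and is essentially the paper's own argument: the paper translates $x$ to the origin and uses $V:=U\cap(-U)$ together with the first-category set $A\cup(-A)$, which is exactly your reflection $\phi(y)=2x-y$ written after a translation. The midpoint trick and the appeal to the Baire property to find a point of $V$ outside $A\cup\phi(A)$ are identical.
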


\begin{proof}
It suffices to show that if $x_{0}\in U\cap A\neq \emptyset ,$ then $%
x_{0}\in C.$ After translation, it is not restrictive to assume that $%
x_{0}=0.$

The set $V:=U\cap (-U)\subset U$ is an open neighborhood of $0$ and $A\cup
(-A)$ is of first category. Thus, $V\backslash (A\cup (-A))\neq \emptyset .$
If $x\in V\backslash (A\cup (-A)),$ both $x$ and $-x$ are in $V\backslash
A\subset U\backslash A,$ hence in $C.$ Since $C$ is convex, $0=\frac{1}{2}x+%
\frac{1}{2}(-x)\in C.$
\end{proof}

\begin{lemma}
\label{lm3}Let $C\subset X$ be a convex subset and $U\subset X$ be an open
subset.\newline
(i) If $C$ is of second category and $U\cap C\neq \emptyset ,$ then $U\cap C$
is of second category.\newline
(ii) If $U\cap (X\backslash C)\neq \emptyset ,$ then $U\cap (X\backslash C)$
is of second category.
\end{lemma}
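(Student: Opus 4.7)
The plan is to treat the two parts separately, reducing each to a simple absorbency/translation argument together with previously established facts.

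For part (ii), I would apply Lemma~\ref{lm2} directly. Setting $A := U\cap (X\setminus C)$, if $A$ were of first category in $X$, then every point of $U$ not in $A$ lies in $C$, i.e. $U\setminus A\subset C$. Lemma~\ref{lm2} then gives $U\subset C$, contradicting the hypothesis $U\cap (X\setminus C)\neq\emptyset$. Since a subset of a first category set is of first category, the only remaining possibility is that $A$ is of second category. This part is essentially immediate.

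For part (i), the idea is to exploit convexity together with the absorbency of neighborhoods of the origin in a tvs. Fix $x_{0}\in U\cap C$ and, after translating by $-x_{0}$, assume $0\in U\cap C$; translations are homeomorphisms, so they preserve both convexity and Baire category. Since $U$ is then an open neighborhood of $0$ in the tvs $X$, it is absorbing: $X=\bigcup_{n\geq 1}nU$, hence $C=\bigcup_{n\geq 1}(nU\cap C)$. The key observation is the inclusion $nU\cap C\subset n(U\cap C)$: if $y\in nU\cap C$, then $\tfrac{1}{n}y\in U$, and by convexity of $C$ together with $0\in C$ we have $\tfrac{1}{n}y=\tfrac{1}{n}y+(1-\tfrac{1}{n})\cdot 0\in C$, so $\tfrac{1}{n}y\in U\cap C$.

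Therefore $C\subset \bigcup_{n\geq 1}n(U\cap C)$. Assuming for contradiction that $U\cap C$ is of first category, each $n(U\cap C)$ is of first category as well, because $x\mapsto nx$ is a homeomorphism of $X$ and homeomorphisms preserve Baire category. A countable union of first category sets is of first category, and any subset of a first category set is of first category; hence $C$ would be of first category, contradicting the hypothesis. Thus $U\cap C$ must be of second category.

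The main potential obstacle is being careful that $U\cap C$ need not be convex or even connected, so one cannot apply statements about convex sets directly to it. The proof sidesteps this by using convexity only through the scaling inclusion $nU\cap C\subset n(U\cap C)$ (which requires $0\in C$, arranged by translation) and leveraging absorbency of the neighborhood $U$ of the origin; the Baire-category bookkeeping is then routine.
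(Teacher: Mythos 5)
Your proof is correct and follows essentially the same route as the paper: part (ii) is the identical reduction to Lemma~\ref{lm2}, and part (i) is the same translation-plus-scaling argument, with your explicit appeal to absorbency of the neighborhood $U$ of $0$ being just a repackaging of the paper's use of continuity of scalar multiplication to get $\tfrac{1}{n}x\in U$ for large $n$.
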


\begin{proof}
(i) By contradiction, assume that $U\cap C$ is of first category. Since $%
U\cap C\neq \emptyset ,$ pick $x_{0}\in U\cap C.$ After translation, it is
not restrictive to assume $x_{0}=0\in U\cap C.$ Let $x\in C$ be given. Then, 
$\frac{1}{n}x\in C$ for every $n\in \Bbb{N}$ and $\frac{1}{n}x\in U$ if $n$
is large enough since $U$ is open and the scalar multiplication is
continuous. Thus, $\frac{1}{n}x\in U\cap C$ for every $x\in C$ and some $%
n\in \Bbb{N}.$ This means that $C\subset \cup _{n\in \Bbb{N}}n(U\cap C).$
Since $U\cap C$ is of first category, the same thing is true of $n(U\cap C)$
for every $n,$ so that $\cup _{n\in \Bbb{N}}n(U\cap C)$ is of first
category. But then, $C$ is of first category, which is a contradiction.

(ii) Since $U\cap (X\backslash C)=U\backslash C\neq \emptyset,$ it is
obvious that $U\nsubseteq C.$ By contradiction, assume that $U\backslash C$
is of first category and note that $U\backslash (U\backslash C)=U\cap
C\subset C.$ Since $U$ is open, $C$ is convex and $U\backslash C$ is of
first category, it follows from Lemma \ref{lm2} that $U\subset C,$ which is
a contradiction.
\end{proof}

Given a function $f:X\rightarrow \Bbb{R}$ and $\alpha \in \Bbb{R},$ recall
the notation $F_{\alpha }:=\{x\in X:f(x)<\alpha \}$ and $F_{\alpha }^{\prime
}:=\{x\in X:f(x)\leq \alpha \}$ in (\ref{1}) and (\ref{2}). This notation
will only be used without further specification when the function of
interest is called $f.$

\begin{theorem}
\label{th4}Let $f:X\rightarrow \overline{\Bbb{R}}$ be quasiconvex. \newline
(i) $\sup_{U}f=\mathcal{T}\limfunc{ess}\sup_{U}f$ for every open subset $%
U\subset \Bbb{R}^{N}.$\newline
(ii) $\inf_{U}f=\mathcal{T}\limfunc{ess}\inf_{U}f$ for every open subset $%
U\subset X$ if and only if this is true when $U=X.$
\end{theorem}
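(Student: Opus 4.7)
The strategy is to reduce both claims to the convex-set category dichotomy of Lemma~\ref{lm3}, using that the lower level sets $F_{\alpha}=\{f<\alpha\}$ are convex and the upper sets $\{f>\alpha\}=X\setminus F'_{\alpha}$ are complements of convex sets. The asymmetry between (i) and (ii) stems from the fact that Lemma~\ref{lm3}(ii) needs no global category hypothesis on $F'_{\alpha}$, whereas Lemma~\ref{lm3}(i) requires $F_{\alpha}$ to be of second category in $X$---which is exactly what the assumption $\inf_{X}f=\mathcal{T}\limfunc{ess}\inf_{X}f$ provides in (ii).

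For (i), since $\mathcal{T}\limfunc{ess}\sup_{U}f\leq \sup_{U}f$ always holds, only the reverse inequality requires attention. I would fix a real $\alpha<\sup_{U}f$ and pick $x_{0}\in U$ with $f(x_{0})>\alpha$. The set $\{x\in U:f(x)>\alpha\}=U\setminus F'_{\alpha}$ contains $x_{0}$, and since $F'_{\alpha}$ is convex, Lemma~\ref{lm3}(ii) shows this set is of second category. Hence $\mathcal{T}\limfunc{ess}\sup_{U}f\geq \alpha$, and letting $\alpha \nearrow \sup_{U}f$ finishes the case; the degenerate possibilities $U=\emptyset$ and $f\equiv -\infty$ on $U$ are immediate from the definitions.

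For (ii), the ``only if'' direction is the specialization $U=X$. For ``if'', assume $\inf_{X}f=\mathcal{T}\limfunc{ess}\inf_{X}f$ and let $U\subset X$ be a nonempty open set. To obtain $\mathcal{T}\limfunc{ess}\inf_{U}f\leq \inf_{U}f$, I would fix $\alpha>\inf_{U}f\geq \inf_{X}f=\mathcal{T}\limfunc{ess}\inf_{X}f$. By the definition (\ref{4}) and monotonicity of $\alpha \mapsto F_{\alpha}$, the convex set $F_{\alpha}$ is of second category in $X$. Choosing $x_{0}\in U$ with $f(x_{0})<\alpha $ makes $U\cap F_{\alpha}\neq \emptyset$, and Lemma~\ref{lm3}(i) (which is where the global hypothesis is actually consumed) upgrades second category from $F_{\alpha}$ to $U\cap F_{\alpha}$. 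Thus $\mathcal{T}\limfunc{ess}\inf_{U}f\leq \alpha$, and $\alpha \searrow \inf_{U}f$ completes the proof. The only subtlety I foresee is the bookkeeping around the values $\pm \infty$; arguing directly from (\ref{3})--(\ref{4}) rather than invoking Lemma~\ref{lm1} (whose statement is restricted to real-valued $f$) avoids any gap.
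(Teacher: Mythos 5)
Your proposal is correct and follows essentially the same route as the paper: both arguments rest on applying Lemma~\ref{lm3}(ii) to the convex set $F_{\alpha }^{\prime }$ for the supremum and Lemma~\ref{lm3}(i) to $F_{\alpha }$ for the infimum, with the global hypothesis $\inf_{X}f=\mathcal{T}\limfunc{ess}\inf_{X}f$ supplying exactly the second-category assumption that Lemma~\ref{lm3}(i) requires. The only difference is presentational: the paper reduces to real-valued $f$ via the $\arctan $ trick and then verifies conditions (ii)/(ii') of Lemma~\ref{lm1}, whereas you inline that equivalence and argue directly from the definitions (\ref{3})--(\ref{4}), which handles the extended-real values without the reduction.
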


\begin{proof}
Since changing $f$ into $\arctan f$ does not affect quasiconvexity and since
it is readily checked that $\arctan $ commutes with $\mathcal{T}\limfunc{ess}%
\sup_{U}$ and $\mathcal{T}\limfunc{ess}\inf_{U}$ (as well as with $\sup_{U}$
and $\inf_{U}$), we assume with no loss of generality that $f$ is finite.

(i) We show that the condition (ii) of Lemma \ref{lm1} holds and use its
equivalence with part (i) of that lemma.

Pick $x_{0}\in X$ along with an open subset $U\subset X$ containing $x_{0}$
and $\varepsilon >0.$ The set $\{x\in U:f(x)>f(x_{0})-\varepsilon \}$
contains $x_{0}$ and is the (nonempty) intersection $U\cap ($ $X\backslash
C) $ where $C:=F_{f(x_{0})-\varepsilon }^{\prime }$ is convex. That $U\cap ($
$X\backslash C)$ is of second category follows from part (ii) of Lemma \ref
{lm3}.

(ii) It is obvious that $\inf_{X}f=\mathcal{T}\limfunc{ess}\inf_{X}f$ is
necessary. Conversely, assuming this, we show that condition (ii') of Lemma 
\ref{lm1} holds and use its equivalence with part (i') of that lemma.

Pick $x_{0}\in X$ along with an open subset $U\subset X$ containing $x_{0}$
and $\varepsilon >0.$ The set $\{x\in U:f(x)<f(x_{0})+\varepsilon \}$
contains $x_{0}$ and is the (nonempty) intersection $U\cap C$ where $%
C:=F_{f(x_{0})+\varepsilon }$ is convex. It is also of second category by
definition of $\mathcal{T}\limfunc{ ess}\inf_{X}f$ since $\mathcal{T}%
\limfunc{ess}\inf_{X}f=\inf_{X}f\leq f(x_{0})<f(x_{0})+\varepsilon .$ That $%
U\cap C$ is of second category follows from part (i) of Lemma \ref{lm3}.
\end{proof}

\section{Points of continuity of equivalent quasiconvex functions\label%
{equivalent}}

Suppose that $f,g:X\rightarrow \Bbb{R}$ are quasiconvex functions and that $%
f\sim _{\mathcal{T}}g$ (see (\ref{5})). In this section, we give a complete
answer to the following question: If $x$ is a point of continuity of one
function, when is $x$ a point of continuity of the other? That such a
question can be answered is due to the properties highlighted in Theorem \ref
{th4}, combined with the quasiconvexity of both functions.

We confine attention to real-valued functions, because defining points of
continuity is possible, but unnecessarily convoluted, for extended
real-valued functions. The issue is that since the \emph{size} of such sets
is the matter of interest, it would not be adequate to require, as is
normally done, that the function be finite at a point of continuity.
Instead, it is much simpler to use the usual $\arctan $ trick to reduce the
problem to the real-valued case.

We shall implicitly use the elementary remark that if $f$ is any real-valued
function on $X,$ then $\mathcal{T}\limfunc{ess}\inf_{X}f<\infty ,$ for
otherwise $F_{n}$ is of first category for every $n\in \Bbb{N}$ and so $%
X=\cup _{n\in \Bbb{N}}F_{n}$ is of first category, which is absurd since $X$
is a Baire space. As a result, we will never have to be concerned that any $%
\mathcal{T}$-essential infimum might be $\infty .$ They can, of course, be $-%
\infty .$ We begin with a special case.

\begin{lemma}
\label{lm5}Let $f,g:X\rightarrow \Bbb{R}$ be quasiconvex functions such that 
$f\sim _{\mathcal{T}}g,$ so that $\mathcal{T}\limfunc{ess}\inf_{X}f=\mathcal{%
T}\limfunc{ess}\inf_{X}g:=m\geq -\infty $ (see (\ref{6})). \newline
(i) If also $\inf_{X}f=m=\inf_{X}g$ (always true if $m=-\infty $), then $f$
and $g$ have the same points of continuity and achieve a common value at
such points. \newline
(ii) $\max \{f,m\}$ and $\max \{g,m\}$ have the same points of continuity
and achieve a common value at such points.
\end{lemma}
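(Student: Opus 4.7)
The plan is to combine Theorem \ref{th4} with the hypothesis $f\sim_{\mathcal{T}}g$ to show that the pointwise suprema and infima of $f$ and $g$ agree on every open subset of $X$; from that, coincidence of the points of continuity (and of the values taken there) will follow by a direct sandwich argument. Part (ii) will be reduced to (i) by verifying that the truncations $\tilde{f}:=\max\{f,m\}$ and $\tilde{g}:=\max\{g,m\}$ satisfy the hypothesis of (i).

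For (i), Theorem \ref{th4}(i) applied to the quasiconvex $f$ and $g$, together with (\ref{6}), gives
\begin{equation*}
\sup_U f \;=\; \mathcal{T}\limfunc{ess}\sup_U f \;=\; \mathcal{T}\limfunc{ess}\sup_U g \;=\; \sup_U g
\end{equation*}
for every open $U\subset X$. The additional hypothesis $\inf_X f = m = \mathcal{T}\limfunc{ess}\inf_X f$ (and the analogous identity for $g$) activates Theorem \ref{th4}(ii), so the same chain with $\sup$ replaced by $\inf$ yields $\inf_U f = \inf_U g$ for every open $U$. If $g$ is continuous at $x_0$ with value $c:=g(x_0)$, then for each $\varepsilon>0$ one can pick an open $U\ni x_0$ with $c-\varepsilon\le \inf_U g$ and $\sup_U g\le c+\varepsilon$; transferring these bounds to $f$ and using the sandwich $\inf_U f\le f(x_0)\le \sup_U f$ forces $f(x_0)=c$ and continuity of $f$ at $x_0$. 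Symmetry closes the loop.

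For (ii), the case $m=-\infty$ is immediate: then $\tilde{f}=f$ and $\tilde{g}=g$, and the hypothesis $\inf_X f=-\infty=m$ is automatic, since $m=-\infty$ means $\{f<\alpha\}$ is of second category, hence nonempty, for every $\alpha\in\Bbb{R}$. Assume therefore that $m$ is finite. I would check that $\tilde{f}$ is quasiconvex (its sublevel set $\{\tilde{f}<\alpha\}$ is empty for $\alpha\le m$ and equals the convex set $\{f<\alpha\}$ for $\alpha>m$), that $\tilde{f}\sim_{\mathcal{T}}\tilde{g}$ (since $\max$ is a pointwise function of $f,g$, they agree wherever $f=g$), and crucially that
\begin{equation*}
\inf_X \tilde{f} \;=\; m \;=\; \mathcal{T}\limfunc{ess}\inf_X \tilde{f},
\end{equation*}
with identical computations for $\tilde{g}$. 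Indeed, $\tilde{f}\ge m$ gives one direction for the infimum, while $\{f<m+\varepsilon\}$ is of second category (hence nonempty) for every $\varepsilon>0$ by definition of $m$, producing points with $\tilde{f}<m+\varepsilon$; the $\mathcal{T}$-essential infimum is read off directly from $\{\tilde{f}<\alpha\}=\emptyset$ for $\alpha\le m$ and $\{\tilde{f}<\alpha\}=\{f<\alpha\}$ (of second category) for $\alpha>m$. With these facts in hand, part (i) applies to $\tilde{f}$ and $\tilde{g}$.

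The only delicate step is the bookkeeping in (ii): confirming that truncating at the common $\mathcal{T}$-essential infimum restores the equality between the pointwise and $\mathcal{T}$-essential infima on $X$, so the hypothesis of (i) is legitimately available for $\tilde{f}$ and $\tilde{g}$. Everything else reduces to formal manipulation of the identities supplied by Theorem \ref{th4} and the bilateral squeeze at a point of continuity.
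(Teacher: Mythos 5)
Your proposal is correct and follows essentially the same route as the paper's proof: Theorem \ref{th4}, combined with the invariance (\ref{6}) of the $\mathcal{T}$-essential extrema under $\sim _{\mathcal{T}}$, is used to transfer the local bounds $\inf_{U}$ and $\sup_{U}$ that characterize continuity from one function to the other, and part (ii) is reduced to part (i) by checking that the truncations $\max \{f,m\}$ and $\max \{g,m\}$ satisfy the hypothesis $\inf_{X}=m=\mathcal{T}$-essential infimum. The only cosmetic difference is that you record the identities $\sup_{U}f=\sup_{U}g$ and $\inf_{U}f=\inf_{U}g$ for all open $U$ up front instead of threading them through a fixed point of continuity, and you spell out the (correctly verified) bookkeeping for the truncations that the paper states more tersely.
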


\begin{proof}
(i) Let $x$ denote a point of continuity of $f,$ so that for every $%
\varepsilon >0,$ there is an open neighborhood $U$ of $x$ such that $%
f(U)\subset [f(x)-\varepsilon ,f(x)+\varepsilon ].$ Thus, $\inf_{U}f\geq
f(x)-\varepsilon $ and $\sup_{U}f\leq f(x)+\varepsilon .$ By Theorem \ref
{th4}, this is the same as $\mathcal{T}\limfunc{ess}\inf_{U}f\geq
f(x)-\varepsilon $ and $\mathcal{T}\limfunc{ess}\sup_{U}f\leq
f(x)+\varepsilon .$

Since $f=g$ a.e., the $\mathcal{T}$-essential extremum is unchanged when $f$
is replaced by $g,$ so that $\mathcal{T}\limfunc{ess}\inf_{U}g\geq
f(x)-\varepsilon $ and $\mathcal{T}\limfunc{ ess}\sup_{U}g\leq
f(x)+\varepsilon .$ By using once again Theorem \ref{th4}, it follows that $%
\inf_{U}g\geq f(x)-\varepsilon $ and $\sup_{U}g\leq f(x)+\varepsilon ,$
whence $g(U)\subset [f(x)-\varepsilon ,f(x)+\varepsilon ].$ In particular, $%
g(x)\in [f(x)-\varepsilon ,f(x)+\varepsilon ].$ Since $\varepsilon >0$ is
arbitrary, it follows that $g(x)=f(x)$ and hence that $g(U)\subset
[g(x)-\varepsilon ,g(x)+\varepsilon ],$ which proves the continuity of $g$
at $x.$

In summary, the points of continuity of $f$ are points of continuity of $g$
and $g=f$ at such points. By exchanging the roles of $f$ and $g,$ the
converse is true.

(ii) Just use (i) with $\max \{f,m\}$ and $\max \{g,m\},$ respectively.
Quasiconvexity is not affected and $\max \{f,m\}\sim _{\mathcal{T}}\max
\{g,m\}$ while $\mathcal{T}\limfunc{ess}\inf_{X}\max \{f,m\}=\mathcal{T}%
\limfunc{ess}\inf_{X}\max \{g,m\}=m,$ so that $\inf_{X}\max
\{f,m\}=m=\inf_{X}\max \{g,m\}$ follows from $\max \{f,m\}\geq m,\max
\{g,m\}\geq m$ and $\inf_{X}\leq \mathcal{T}\limfunc{ess}\inf_{X}.$
\end{proof}

The next example shows that the condition $\inf_{X}f=m=\inf_{X}g$ cannot be
dropped in part (i) of Lemma \ref{lm5}.

\begin{example}
\label{ex2}Let $X$ be infinite dimensional and let $H\subset X\;$be a dense
hyperplane of first category. It was first shown by Arias de Reyna \cite
{Ar80} in 1980 that such a hyperplane exists when $X$ is a separable Banach
space, but they also exist in general (Saxon \cite{Sa92}; all known proofs,
including \cite{Ar80}, assume a weak form of the Continuum Hypothesis). If $%
f=\chi _{X\backslash H}$ and $g=1,$ then both $f$ and $g$ are quasiconvex
and $f\sim _{\mathcal{T}}g.$ However, $\inf_{X}f=0\neq m=1=\inf_{X}g$ and
indeed $f$ is nowhere continuous while $g$ is everywhere continuous.
\end{example}

\begin{theorem}
\label{th6}Let $f,g:X\rightarrow \Bbb{R}$ be quasiconvex functions such that 
$f\sim _{\mathcal{T}}g,$ so that $\mathcal{T}\limfunc{ess}\inf_{X}f=\mathcal{%
T}\limfunc{ ess}\inf_{X}g:=m\geq -\infty .$ If $x$ is a point of continuity
of $g,$ then $g(x)\geq m.$ Furthermore, a point of continuity $x$ of $g$ is
not one of $f$ if and only if $m>-\infty ,g(x)=m$ and $x\in F_{m}^{\prime
}\cap \left( \cup _{\alpha <m}\overline{F}_{\alpha }\right) .$
\end{theorem}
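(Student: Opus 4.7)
The plan is to reduce everything to the truncated functions $\max\{f,m\}$ and $\max\{g,m\}$, where Lemma \ref{lm5}(ii) tells us these share points of continuity and agree there. The idea is that $\max\{\cdot,m\}$ converts a point of continuity of $g$ into clean continuity data for $f$, with any failure of $f$'s continuity being confined to the case $g(x)=m$ and manifesting itself as failure of lower semicontinuity of $f$, which in turn will be expressed through the sublevel sets $F_\alpha$ for $\alpha<m$.

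I would first dispose of the inequality $g(x)\geq m$, which is vacuous when $m=-\infty$. If $m>-\infty$ and $g(x)<m$, pick $\varepsilon>0$ with $g(x)+\varepsilon<m$ and a neighborhood $U$ of $x$ on which $g<g(x)+\varepsilon$; then $U\subset\{g<g(x)+\varepsilon\}$, yet the latter is of first category by definition of $m=\mathcal{T}\limfunc{ess}\inf_X g$, contradicting that the Baire space $X$ has no nonempty open set of first category.

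For the main equivalence I would split on the value of $g(x)$. If $g(x)>m$, continuity of $g$ at $x$ gives $g>m$ near $x$, so locally $\max\{g,m\}=g$ and $\max\{g,m\}$ is continuous at $x$ with value $g(x)>m$; Lemma \ref{lm5}(ii) transfers this to $\max\{f,m\}$, which therefore exceeds $m$ in a neighborhood of $x$, forcing $f=\max\{f,m\}$ locally and hence $f$ continuous at $x$. Thus a discontinuity of $f$ at a continuity point of $g$ requires $g(x)=m$, and in particular $m>-\infty$. Conversely, when $g(x)=m$, the pointwise estimate $|\max\{g(y),m\}-m|\leq|g(y)-m|$ makes $\max\{g,m\}$ continuous at $x$ with value $m$; Lemma \ref{lm5}(ii) then gives $\max\{f,m\}$ continuous at $x$ with value $m$, which pins down $f(x)\leq m$ (i.e.\ $x\in F_m'$) and makes $f$ upper semicontinuous at $x$ with local bound $f\leq m+\varepsilon$.

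It then remains to identify when $f$ fails to be lower semicontinuous at such an $x$ and to recognize the result as $x\in\bigcup_{\alpha<m}\overline{F}_\alpha$. The standard characterization is that $f$ is not lsc at $x$ iff $x\in\bigcup_{\beta<f(x)}\overline{F}_\beta$, which when $f(x)=m$ is precisely the stated condition. The anticipated delicate point is the sub-case $f(x)<m$: here both conditions hold automatically, since picking $f(x)<\alpha<m$ places $x\in F_\alpha\subset\overline{F}_\alpha$, while $f$ cannot be continuous at $x$ because continuity would give a neighborhood $U$ of $x$ with $U\subset F_\alpha$, contradicting that $F_\alpha$ is of first category (as $\alpha<m$). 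This dual role of the Baire argument---first for $g(x)\geq m$ and then for forcing discontinuity of $f$ when $f(x)<m$---together with the bookkeeping relating $\bigcup_{\beta<f(x)}\overline{F}_\beta$ to the uniform $\bigcup_{\alpha<m}\overline{F}_\alpha$, is what I expect to require the most care.
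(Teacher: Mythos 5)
Your proposal is correct and follows essentially the same route as the paper: both reduce to the truncations $\max\{f,m\}$ and $\max\{g,m\}$ via Lemma \ref{lm5}(ii), establish $g(x)\geq m$ and the transfer of continuity when $g(x)>m$, and identify the remaining obstruction at level $m$ as a failure of lower semicontinuity encoded by $\bigcup_{\alpha <m}\overline{F}_{\alpha }$. The only cosmetic difference is that you treat the sub-case $f(x)<m$ by a direct Baire-category argument, where the paper simply observes $x\in F_{m}\subset \bigcup_{\alpha <m}\overline{F}_{\alpha }$ and obtains sufficiency from its preliminary step (ii) applied to $f$.
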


\begin{proof}
If $m=-\infty ,$ the only nontrivial property follows from part (i) of Lemma 
\ref{lm5}. Below, $m>-\infty $ and $x$ denotes a point of continuity of $g.$
To make the proof more transparent, we give four preliminary results.

(i) $g(x)\geq m.$ By contradiction, assume that $g(x)<m$ and let $\alpha \in 
\Bbb{R}$ be such that $g(x)<\alpha <m.$ By definition of $m,$ the set $%
G_{\alpha }:=\{y\in X:g(y)<\alpha \}$ is of first category. On the other
hand, $G_{\alpha }=g^{-1}((-\infty ,\alpha ))$ is a neighborhood of $x$ in $%
X $ since $g$ is continuous at $x.$ Therefore, $G_{\alpha }$ is of second
category, which is a contradiction.

(ii) $x\notin \cup _{\alpha <m}\overline{G}_{\alpha }.$ By (i), $g(x)\geq m.$
By contradiction, assume that $x\in \overline{G}_{\alpha }$ for some $\alpha
<m,$ whence $U\cap G_{\alpha }\neq \emptyset $ for every neighborhood $U$ of 
$x.$ Choose $\varepsilon >0$ such that $\alpha <g(x)-\varepsilon .$ Since $g$
is continuous at $x,$ then $U:=g^{-1}((g(x)-\varepsilon ,\infty ))$ is a
neighborhood of $x$ but $U\cap G_{\alpha }=\emptyset $ and a contradiction
is reached.

(iii) If $g(x)>m$ \emph{or} $f(x)>m,$ then $f$ is continuous at $x.$ Indeed, 
$x$ is a point of continuity of $\max \{g,m\}.$ Thus, by part (ii) of Lemma 
\ref{lm5}, it is a point of continuity of $\max \{f,m\}$ and $\max
\{g(x),m\}=\max \{f(x),m\}.$ As a result, $g(x)>m$ \emph{\ and} $f(x)>m.$
Let $\varepsilon >0$ be small enough that $m<f(x)-\varepsilon $ and let $%
I_{\varepsilon }:=(f(x)-\varepsilon ,f(x)+\varepsilon ).$ Then, $(\max
\{f,m\})^{-1}(I_{\varepsilon })$ is a neighborhood $W_{\varepsilon }$ of $x.$
From the choice of $\varepsilon ,$ it is obvious that $W_{\varepsilon
}=f^{-1}(I_{\varepsilon }).$ Since this is true for every $\varepsilon >0$
small enough, it follows that $f$ is continuous at $x.$

(iv) If $f(x)=m$ and $x\notin \cup _{\alpha <m}\overline{F}_{\alpha },$ then 
$f$ is continuous at $x.$ Recall that $x$ is a point of continuity of $\max
\{f,m\}$ (see (iii)). Accordingly, given $\varepsilon >0,$ the set $V:=(\max
\{f,m\})^{-1}((-\infty ,m+\varepsilon ))$ is a neighborhood of $x$ and, if $%
y\in V,$ then $f(y)<m+\varepsilon .$ On the other hand, $x\notin \overline{F}%
_{m-\frac{\varepsilon }{2}},$ so that there is a neighborhood $W$ of $x$
such that $W\cap F_{m-\frac{\varepsilon }{2}}=\emptyset .$ Equivalently, $%
f(y)\geq m-\frac{\varepsilon }{2}$ for $y\in W$ and so, if $y\in V\cap W,$ a
neighborhood of $x,$ then $m-\varepsilon <f(y)<m+\varepsilon .$ Since $%
f(x)=m,$ this reads $|f(y)-f(x)|<\varepsilon ,$ which shows that $f$ is
continuous at $x.$

We now prove the theorem. That $g(x)\geq m$ is (i). Next, if $x$ is not a
point of continuity of $f,$ then $g(x)=m$ and $f(x)\leq m,$ i.e., $x\in
F_{m}^{\prime },$ by (i) and (iii). That $x\in \cup _{\alpha <m}\overline{F}%
_{\alpha }$ is obvious if $x\in F_{m}=\cup _{\alpha <m}F_{\alpha }$ and so,
since $x\in F_{m}^{\prime }$ is already known, it suffices to show that $%
x\in \cup _{\alpha <m}\overline{F}_{\alpha }$ if $f(x)=m.$ This follows from
(iv) since $x$ is not a point of continuity of $f.$ This proves the
necessity of the conditions. By (ii) for $f$ instead of $g$ (i.e., if $x\in
\cup _{\alpha <m}\overline{F}_{\alpha },$ then $f$ is not continuous at $x$%
), their sufficiency is obvious.
\end{proof}

With $f$ and $g$ of Example \ref{ex2}, $g=m=1$ is constant but $\overline{F}%
_{\alpha }=X$ for every $\alpha \in (0,1)$ and $F_{1}^{\prime }=X$ so that,
by Theorem \ref{th6}, $f$ is nowhere continuous, which is indeed true. If
the hyperplane $H$ in that example is of second category (such hyperplanes
are easily constructed in any infinite dimensional tvs \cite[p. 14]{Va82}),
then Theorem \ref{th6} is not applicable since $f\neq g$ on $H,$ of second
category.

\begin{corollary}
\label{cor7}Let $f,g:X\rightarrow \Bbb{R}$ be quasiconvex with $f\sim _{%
\mathcal{T}}g,$ so that $\mathcal{T}\limfunc{ess}\inf_{X}f=\mathcal{T}%
\limfunc{ess}\inf_{X}g:=m$ $\geq -\infty .$ Suppose also that the set of
points of discontinuity of $g$ is of first category. Then, the set of points
of discontinuity of $f$ is of first category if and only if one of the
following two conditions holds:\newline
(i) $m=-\infty $.\newline
(ii) $m>-\infty $ and $f^{-1}(m)\cap \overline{F}_{\alpha }$ is of first
category for every $\alpha <m$.
\end{corollary}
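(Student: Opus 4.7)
The plan is to reduce everything to Theorem~\ref{th6}, which pinpoints exactly which points of continuity of $g$ fail to be points of continuity of $f$. Let $C_g$ denote the set of points of continuity of $g$, let $D_g = X \setminus C_g$, and similarly for $D_f$. By hypothesis $D_g$ is of first category, so $C_g$ is residual. Also set $E = \{x \in X : f(x) \ne g(x)\}$; since $f \sim_{\mathcal{T}} g$, $E$ is of first category. Write $D_f = (D_f \cap D_g) \cup (D_f \cap C_g)$; the first piece is always of first category, so the question reduces entirely to the size of $D_f \cap C_g$.

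For sufficiency, suppose (i) or (ii) holds. If $m = -\infty$, Theorem~\ref{th6} has no point of continuity of $g$ fail to be one of $f$, so $D_f \cap C_g = \emptyset$ and we are done. If $m > -\infty$, Theorem~\ref{th6} gives
\begin{equation*}
D_f \cap C_g \subset \{x : g(x) = m\} \cap F_m' \cap \Bigl(\bigcup_{\alpha < m} \overline{F}_{\alpha}\Bigr).
\end{equation*}
Outside of $E$ we have $g = f$, so $\{g = m\} \cap F_m' \subset f^{-1}(m) \cup E$. Since $F_{\alpha}$ is increasing in $\alpha$, picking any sequence $\alpha_n \uparrow m$ yields $\bigcup_{\alpha < m} \overline{F}_{\alpha} = \bigcup_n \overline{F}_{\alpha_n}$, and (ii) asserts that each $f^{-1}(m) \cap \overline{F}_{\alpha_n}$ is of first category. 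Hence
\begin{equation*}
D_f \cap C_g \subset E \cup \bigcup_{n} \bigl(f^{-1}(m) \cap \overline{F}_{\alpha_n}\bigr)
\end{equation*}
is of first category, and so is $D_f$.

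For necessity, suppose $D_f$ is of first category; we show (i) or (ii) holds. Assume toward contradiction that $m > -\infty$ and that there is some $\alpha_0 < m$ with $f^{-1}(m) \cap \overline{F}_{\alpha_0}$ of second category. Intersect this set with the residual set $C_g \cap (X \setminus E)$; the result is still of second category (it would otherwise write a second category set as the union of three first category sets). For any $x$ in this intersection we have $f(x) = m$, hence $g(x) = f(x) = m$, while $x \in F_m'$ and $x \in \overline{F}_{\alpha_0} \subset \bigcup_{\alpha < m} \overline{F}_{\alpha}$; Theorem~\ref{th6} then forces $x \in D_f$, contradicting the first-category hypothesis on $D_f$.

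The argument is essentially bookkeeping once Theorem~\ref{th6} is in hand; the only nonroutine points are the two reductions that both use $f \sim_{\mathcal{T}} g$ to trade $g$ for $f$ on $f^{-1}(m)$ modulo a first-category set, and the countability observation $\bigcup_{\alpha < m} \overline{F}_{\alpha} = \bigcup_n \overline{F}_{\alpha_n}$, which converts a possibly uncountable union into a countable one and is what lets the condition in (ii) be verified on a sequence rather than a continuum of parameters.
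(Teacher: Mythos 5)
Your proof is correct and follows essentially the same route as the paper's: reduce to the set $D_f\cap C_g$, apply the exact characterization of Theorem~\ref{th6}, trade $g^{-1}(m)$ for $f^{-1}(m)$ modulo the first-category set where $f\neq g$, and use the countability of the union $\bigcup_{\alpha<m}\overline{F}_{\alpha}$. The only cosmetic difference is that you split the argument into separate sufficiency and necessity halves, whereas the paper runs a single chain of equivalences; the mathematical content is identical.
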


\begin{proof}
If $x$ is a point of discontinuity of $f,$ it is either a point of
discontinuity of both $f$ and $g,$ or a point of continuity of $g$ which is
not one of $f.$ Since the set $B$ of points of discontinuity of $g$ is of
first category, the same thing is true of the set of points of discontinuity
of $f$ if and only if the points of $X\backslash B$ (i.e., points of
continuity of $g$) that are not points of continuity of $f$ belong in a set
of first category. By Theorem \ref{th6}, this happens if and only if (1) $%
m=-\infty $ (and then every point of $X\backslash B$ is a point of
continuity of $f$) or (2) $m>-\infty $ and $\{x\in F_{m}^{\prime }\cap
\left( \cup _{\alpha <m}\overline{F}_{\alpha }\right) :g(x)=m,x\in
X\backslash B\}$ is of first category. For clarity, rewrite this set as $%
(g^{-1}(m)\backslash B)\cap F$ where $F:=F_{m}^{\prime }\cap \left( \cup
_{\alpha <m}\overline{F}_{\alpha }\right) .$ Since $B$ is of first category
and $(g^{-1}(m)\backslash B)\cap F=((g^{-1}(m)\cap F)\backslash B,$ the sets 
$(g^{-1}(m)\backslash B)\cap F$ and $g^{-1}(m)\cap F$ are of first category
simultaneously. Furthermore, since $f\sim _{\mathcal{T}}g,$ the sets $%
g^{-1}(m)\cap F$ and $f^{-1}(m)\cap F$ are also of first category
simultaneously. But $f^{-1}(m)\subset F_{m}^{\prime },$ so that $%
f^{-1}(m)\cap F$ is just $f^{-1}(m)\cap \left( \cup _{\alpha <m}\overline{F}%
_{\alpha }\right) .$ Since the union is actually a countable union, this set
is of first category if and only if $f^{-1}(m)\cap \overline{F}_{\alpha }$
is of first category for every $\alpha <m.$
\end{proof}

In Example \ref{ex2}, $m=1,\overline{F}_{\alpha }=X$ if $\alpha \in (0,1)$
and $f^{-1}(1)$ is residual (hence of second category since $X$ is a Baire
space) so that condition (ii) of Corollary \ref{cor7} fails.

\section{Continuity at the points of a residual subset\label{continuity}}

We shall give a few equivalent necessary and sufficient conditions for the
set of points of discontinuity of a quasiconvex function $f:X\rightarrow 
\Bbb{R}$ to be of first category (Corollary \ref{cor10}). We begin with the
construction of an auxiliary function.

\begin{lemma}
\label{lm8}Let $f:X\rightarrow \Bbb{R}$ be quasiconvex and set 
\begin{equation}
\overline{f}(x):=\inf \{\alpha \in \Bbb{R}:x\in \overset{\circ }{F}_{\alpha
}\}.  \label{7}
\end{equation}
The following properties hold:\newline
(i) $f\leq \overline{f}.$\newline
(ii) $\overline{f}$ is quasiconvex and upper semicontinuous (usc).\newline
(iii) If, in addition, $\overset{\circ }{F_{\alpha }}\neq \emptyset $
whenever $\alpha >m:=\mathcal{T}\limfunc{ess}\inf_{X}f,$ then $\overline{f}$
is real-valued and $f\sim _{\mathcal{T}}\overline{f}.$
\end{lemma}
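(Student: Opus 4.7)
My plan is to take the three claims in order; the main ideas are an explicit formula for the strict lower level sets of $\overline{f}$ and, for (iii), a midpoint trick.

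Claim (i) is immediate: $x\in\overset{\circ}{F}_\alpha$ forces $f(x)<\alpha$, and taking the infimum over such $\alpha$ gives $f(x)\leq\overline{f}(x)$. For (ii) I will use the identity
\[
\{\overline{f}<\beta\}=\bigcup_{\alpha<\beta}\overset{\circ}{F}_\alpha,
\]
which comes directly from the definition. Openness is automatic, giving upper semicontinuity. The family $(\overset{\circ}{F}_\alpha)_{\alpha}$ is nested increasing, and each $\overset{\circ}{F}_\alpha$ is convex by Remark~\ref{rm1}, so the nested union is convex, giving quasiconvexity.

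For (iii), I first prove $\overline{f}$ is real-valued. From (i), $\overline{f}\geq f>-\infty$. For finiteness above, fix (by hypothesis) some $\alpha_0>m$ with $\overset{\circ}{F}_{\alpha_0}\neq\emptyset$, a point $x_0$ in that interior, and an open neighborhood $U\subset F_{\alpha_0}$ of $x_0$. The midpoint trick is as follows: given $x\in X$, set $y_0:=2x-x_0$ so that $x=\tfrac{1}{2}(x_0+y_0)$, and consider the open neighborhood $V:=\tfrac{1}{2}(U+y_0)$ of $x$. Quasiconvexity gives, for every $u\in U$,
\[
f\bigl(\tfrac{1}{2}(u+y_0)\bigr)\leq\max\{f(u),f(y_0)\}<\max\{\alpha_0,f(y_0)+1\}=:\gamma,
\]
so $V\subset F_\gamma$, hence $x\in\overset{\circ}{F}_\gamma$ and $\overline{f}(x)\leq\gamma<\infty$.

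To finish (iii), since $f\leq\overline{f}$, I need $\{f<\overline{f}\}$ of first category. The key reduction is that if $f(x)<\overline{f}(x)$, then the open interval $(f(x),\overline{f}(x))$ contains a rational $q\neq m$, and for this $q$ one has $x\in F_q$ while $\overline{f}(x)>q$ forces $x\notin\overset{\circ}{F}_q$; hence
\[
\{f<\overline{f}\}\subset\bigcup_{q\in\Bbb{Q}\setminus\{m\}}(F_q\setminus\overset{\circ}{F}_q).
\]
Each term of this countable union is of first category: for $q<m$, $F_q$ itself is of first category by the definition of $m$; for $q>m$, the hypothesis gives $\overset{\circ}{F}_q\neq\emptyset$, so Remark~\ref{rm1} yields $\overline{F}_q=\overline{\overset{\circ}{F}_q}$, whence $F_q\setminus\overset{\circ}{F}_q\subset\partial\overset{\circ}{F}_q$ is nowhere dense. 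The main obstacle I foresee is the finiteness part of (iii): nothing in the definition of $\overline{f}$ places $x$ a priori in the interior of any sublevel set, and the midpoint construction is what propagates the single given interior point of $F_{\alpha_0}$ to every $x\in X$.
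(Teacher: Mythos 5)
Your proof is correct, and for most of the lemma it coincides with the paper's: parts (i) and (ii) use the same observations (the identity $\{\overline{f}<\beta \}=\cup _{\alpha <\beta }\overset{\circ }{F_{\alpha }}$, an open convex set as a nested union of open convex sets), and your proof that $f\sim _{\mathcal{T}}\overline{f}$ is the paper's argument in only slightly different packaging: a countable decomposition over rationals $q\neq m$, with $F_{q}$ of first category when $q<m$ and $F_{q}\backslash \overset{\circ }{F_{q}}\subset \partial \overset{\circ }{F_{q}}$ nowhere dense when $q>m$ via Remark \ref{rm1}. Where you genuinely diverge is the finiteness of $\overline{f}$. The paper proceeds by contradiction: if $C:=\cup _{\alpha \in \Bbb{Q}\backslash \{m\}}\overset{\circ }{F_{\alpha }}\neq X,$ then $X\backslash C$ is of second category by Lemma \ref{lm3}(ii), yet it is covered by the countably many nowhere dense boundaries $\partial F_{\alpha },$ $\alpha >m$ rational --- a Baire category argument resting on the convexity of $C.$ Your midpoint trick instead propagates a single interior point $x_{0}$ of a single sublevel set $F_{\alpha _{0}}$ to an arbitrary $x\in X$ by writing $x$ as the midpoint of $x_{0}$ and $y_{0}:=2x-x_{0}$ and using quasiconvexity together with the continuity of the vector space operations; the computation $f(\tfrac{1}{2}(u+y_{0}))\leq \max \{f(u),f(y_{0})\}<\gamma $ for $u\in U\subset F_{\alpha _{0}}$ is correct and shows $\tfrac{1}{2}(U+y_{0})$ is an open neighborhood of $x$ inside $F_{\gamma }.$ This buys something: it avoids the Baire property entirely for that step and needs only \emph{one} level $\alpha _{0}>m$ with $\overset{\circ }{F_{\alpha _{0}}}\neq \emptyset $ rather than all of them. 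The only implicit point, which you share with the paper (it is recorded at the start of Section \ref{equivalent}), is that $m<\infty $ for a real-valued $f$ on a Baire space, so that such an $\alpha _{0}$ exists.
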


\begin{proof}
(i) By contradiction, if $x\in X$ and $\overline{f}(x)<f(x),$ there is $%
\alpha <f(x)$ such that $x\in \overset{\circ }{F_{\alpha }}\subset F_{\alpha
}$ and so $f(x)<\alpha <f(x),$ which is absurd.

(ii) Both properties follow from the simple remark that, if $\beta \in \Bbb{R%
},$ then $\{x\in X:\overline{f}(x)<\beta \}=\cup _{\alpha <\beta }\overset{%
\circ }{F_{\alpha }},$ an open convex subset since the sets $\overset{\circ 
}{F_{\alpha }}$ are convex and linearly ordered.

(iii) By (i), $\overline{f}(x)>-\infty $ since $f$ is real-valued. It
remains to show that if it is assumed that $\overset{\circ }{F_{\alpha }}
\neq \emptyset $ when $\alpha >m,$ then $\overline{f}(x)<\infty $ and $f\sim
_{\mathcal{T}}\overline{f}.$ From now on, we set $\Bbb{Q}_{m}:=\Bbb{Q}
\backslash \{m\},$ a countable dense subset of $\Bbb{R}.$ Of course, $\Bbb{Q}
_{m}=\Bbb{Q}$ if $m=-\infty $ or $m$ is irrational.

By definition of $\overline{f}$ in (\ref{7}), $\overline{f}(x)<\infty $ for
every $x\in X$ if (and in fact only if) $C:=\cup _{\alpha \in \Bbb{Q}_{m}}%
\overset{\circ }{F_{\alpha }}=X.$ By contradiction, assume $C\neq X.$ Since $%
C$ is convex, it follows from part (ii) of Lemma \ref{lm3} with $U=X$ that $%
X\backslash C$ is of second category. Now, if $x\in X\backslash C,$ there is 
$\alpha \in \Bbb{Q}_{m}$ such that $\alpha >\max \{m,f(x)\}$ (recall that $%
m=\infty $ does not occur), so that $x\in F_{\alpha }.$ Actually, $x\in
\partial F_{\alpha }$ since $x\notin \overset{\circ }{F_{\alpha }}\subset C$
and $\partial F_{\alpha }=\partial \overset{\circ }{F_{\alpha }}$ by Remark 
\ref{rm1} and the standing assumption that $\overset{\circ }{F_{\alpha }}%
\neq \emptyset $ when $\alpha >m.$ Thus, $\partial F_{\alpha }$ is nowhere
dense (recall that boundaries of open subsets are nowhere dense). As a
result, $X\backslash C\subset \cup _{\alpha \in \Bbb{Q}_{m},\alpha
>m}\partial F_{\alpha },$ a set of first category since $\Bbb{Q}_{m}$ is
countable. Thus, $X\backslash C$ is of first category. This contradiction
shows that $C=X.$

To complete the proof, we show that $f\sim _{\mathcal{T}}\overline{f}.$ Set $%
\Sigma :=\{x\in X:f(x)\neq \overline{f}(x)\}=\{x\in X:f(x)<\overline{f}(x)\}$
by part (i). By the denseness of $\Bbb{Q}_{m}$ in $\Bbb{R},$ $\Sigma =\cup
_{\alpha \in \Bbb{Q}_{m}}\Sigma (\alpha )$ where $\Sigma (\alpha ):=\{x\in
X:f(x)<\alpha <\overline{f}(x)\}.$ Since $\Bbb{Q}_{m}$ is countable, it
suffices to show that $\Sigma (\alpha )$ is of first category for every $%
\alpha \in \Bbb{Q}_{m}.$

If $x\in \Sigma (\alpha ),$ then $x\in F_{\alpha }$ and, if $\alpha <m,$ it
follows from the definition of $m$ that $F_{\alpha }$ is of first category.
If now $\alpha >m,$ then $\overset{\circ }{F_{\alpha }}\neq \emptyset $ is
assumed, so that once again $\partial F_{\alpha }=\partial \overset{\circ }{
F_{\alpha }}$ and $\partial F_{\alpha }$ is nowhere dense. On the other
hand, it is trivial that $x\in \partial F_{\alpha },$ for if $x\in \overset{
\circ }{F_{\alpha }},$ it follows from (\ref{7}) that $\overline{f}(x)\leq
\alpha ,$ which is a contradiction.

Since $m\notin \Bbb{Q}_{m},$ the above shows that $\Sigma (\alpha )$ is
always contained in a subset of $X$ of first category (i.e., $F_{\alpha }$
or $\partial F_{\alpha }$), so that it is of first category.
\end{proof}

It is not hard to check that $\overline{f}$ in (\ref{7}) is the usc hull of $%
f$ (smallest usc function $g\geq f$), but this not relevant. In general, $%
f\sim _{\mathcal{T}}\overline{f}$ is false: If $f$ is a discontinuous linear
form on $X,$ then $\overset{\circ }{F_{\alpha }}=\emptyset $ for every $%
\alpha ,$ so that $\overline{f}=\infty \neq f$ everywhere.

Under the assumption made in part (iii) of Lemma \ref{lm8}, $\overline{f}$
is real-valued and usc, so that its set of points of discontinuity is of
first category. This is often quoted only when $X$ is a complete metric
space, but true and elementary in every topological space; see for instance 
\cite[Lemma 2.1]{Na72}. Even though $f\sim _{\mathcal{T}}\overline{f},$ the
set of points of discontinuity of $f$ need not be of first category: In
Example \ref{ex2}, $\overline{f}=g$ but $f$ is nowhere continuous. However,
Corollary \ref{cor7} shows at once which condition is missing:

\begin{theorem}
\label{th9}Let $f:X\rightarrow \Bbb{R}$ be quasiconvex and set $m:=\mathcal{T%
}\limfunc{ess}\inf_{X}f.$ Suppose also that (i) $\overset{\circ }{F_{\alpha }%
}\neq \emptyset $ if $\alpha >m$ and that (ii) $f^{-1}(m)\cap \overline{F}%
_{\alpha }$ is of first category if $m>-\infty $ and $\alpha <m.$ Then, the
set of points of discontinuity of $f$ is of first category.
\end{theorem}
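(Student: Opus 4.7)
The plan is to combine Lemma \ref{lm8} with Corollary \ref{cor7} almost mechanically. The key observation is that condition (i) of the theorem is exactly the hypothesis needed to invoke Lemma \ref{lm8}(iii), while condition (ii) is tailored to match condition (ii) of Corollary \ref{cor7}. So the proof will essentially amount to naming the right auxiliary function and checking that the hypotheses line up.

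First, I would apply Lemma \ref{lm8}. Under hypothesis (i) of the theorem, part (iii) of that lemma produces a real-valued quasiconvex function $\overline{f}:X\to\mathbb{R}$ with $f\sim_{\mathcal{T}}\overline{f}$, and by part (ii) of the lemma $\overline{f}$ is upper semicontinuous. Two consequences are immediate: from (\ref{6}) we get $\mathcal{T}\operatorname{ess}\inf_{X}\overline{f}=\mathcal{T}\operatorname{ess}\inf_{X}f=m$, and from the standard fact (cited in the paragraph preceding the theorem) that a real-valued usc function on any topological space has its set of points of discontinuity of first category, the function $\overline{f}$ satisfies the crucial hypothesis of Corollary \ref{cor7}.

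Next, I would apply Corollary \ref{cor7} with the roles $f$ and $g:=\overline{f}$. Everything required is in place: both functions are real-valued and quasiconvex, $f\sim_{\mathcal{T}}\overline{f}$, the common $\mathcal{T}$-essential infimum is $m$, and the set of points of discontinuity of $\overline{f}$ is of first category. The conclusion will then be that the set of points of discontinuity of $f$ is of first category if and only if either $m=-\infty$ (condition (i) of the corollary) or $m>-\infty$ and $f^{-1}(m)\cap \overline{F}_{\alpha}$ is of first category for every $\alpha<m$ (condition (ii) of the corollary). But these are precisely the conditions (i) and (ii) in the statement of Theorem \ref{th9}: if $m=-\infty$ the second condition is vacuous and the first is automatic, while if $m>-\infty$ our hypothesis (ii) is verbatim the condition required.

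There is essentially no obstacle: the work was already done in Lemma \ref{lm8} (constructing a usc equivalent of $f$ under hypothesis (i)) and in Corollary \ref{cor7} (reducing the discontinuity question to a first-category condition on $f^{-1}(m)\cap\overline{F}_{\alpha}$). The only minor subtlety worth noting in the write-up is the convention from \S\ref{extrema} that the notation $F_{\alpha},\overline{F}_{\alpha}$ always refers to the level sets of $f$ itself, so that the conditions quoted from Corollary \ref{cor7} really are the same symbols that appear in the theorem's hypothesis (ii), rather than level sets of $\overline{f}$.
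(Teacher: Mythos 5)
Your proposal is correct and follows exactly the paper's own proof: invoke Lemma \ref{lm8}(iii) under hypothesis (i) to obtain the real-valued, quasiconvex, usc function $\overline{f}\sim_{\mathcal{T}}f$ (whose set of points of discontinuity is of first category because it is usc), then apply Corollary \ref{cor7} with $g=\overline{f}$, where hypothesis (ii) supplies precisely the required condition. No gaps.
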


\begin{proof}
Condition (i) is the same as in part (iii) of Lemma \ref{lm8}, so that, as
noted before the theorem, the set of points of discontinuity of $\overline{f}
$ is of first category. Since also $f\sim _{\mathcal{T}}\overline{f}$ and $%
\overline{f}$ is real-valued and quasiconvex (Lemma \ref{lm8}), the set of
points of discontinuity of $f$ is of first category by Corollary \ref{cor7}
with $g=\overline{f}.$
\end{proof}

When $X=\Bbb{R}^{N},$ ``first category'', ``nowhere dense'' and ``empty
interior'' are synonymous for convex subsets. As a result, the hypotheses of
Theorem \ref{th9} always hold since $F_{\alpha }$ is of first (second)
category when $\alpha <m$ ($\alpha >m)$ by definition of $m.$ Thus, the set
of points of discontinuity of a real-valued quasiconvex function on $\Bbb{R}%
^{N}$ is always of first category.

\begin{remark}
\label{rm2}Since the set of points of discontinuity of an lsc function is of
first category, Theorem \ref{th9} when $X=\Bbb{R}^{N}$ follows at once from
the remark that the points of discontinuity of $f$ are exactly those of its
lsc hull\footnote{%
The $\arctan $ trick can be used to make sure that $\underline{f}$ is finite.%
}$\underline{f}$ (largest lsc function $g\leq f$). This is shown in 
\cite[Proposition 3.5]{Cr05} and is false if $\dim X=\infty $ (but see
Theorem \ref{th15}). Since $\underline{f}$ is quasiconvex, this also implies
that the $\sigma $-porosity result in \cite[Theorem 19]{BoWa05} in the lsc
case actually applies equally to $\underline{f}$ and $f$ and so does not
require lower semicontinuity.
\end{remark}

Another quick proof of Theorem \ref{th9} when $X=\Bbb{R}^{N}$ is that the
set of points of discontinuity of any real-valued function $f$ being an $%
\mathcal{F}_{\sigma },$ it can only be of first category or have nonempty
interior. Since it has measure zero when $f$ is quasiconvex (\cite{ChCr87}, 
\cite{Cr81}), the latter is impossible. Our attempts to use the $\mathcal{F}%
_{\sigma }$ property to get a shorter proof of Theorem \ref{th9} were not
successful when $\dim X=\infty .$ However, it is noteworthy that it implies
that if the set of points of discontinuity of a real-valued function $f$ is
not of first category, then $f$ is discontinuous at \emph{every} point of a
nonempty open subset.

As a corollary, we obtain necessary and sufficient conditions for the set of
points of discontinuity of $f$ to be of first category.

\begin{corollary}
\label{cor10}Let $f:X\rightarrow \Bbb{R}$ be quasiconvex and let $m:=%
\mathcal{T}\limfunc{ess}\inf_{X}f.$ The following statements are equivalent:%
\newline
(i) The set of points of discontinuity of $f$ is of first category.\newline
(ii) $F_{\alpha }$ is nowhere dense whenever $\overset{\circ }{F_{\alpha
}^{\prime }}=\emptyset .$\newline
(iii) $F_{\alpha }$ is nowhere dense whenever $\overset{\circ }{F_{\alpha
}^{\prime }}=\emptyset ,\alpha \neq m.$ \newline
(iv) $\overset{\circ }{F_{\alpha }^{\prime }}\neq \emptyset $ if $\alpha
>m,F_{\alpha }$ is nowhere dense if $m>-\infty $ and $\alpha <m.$\newline
(v) $\overset{\circ }{F_{\alpha }}\neq \emptyset $ if $\alpha >m,F_{\alpha }$
is nowhere dense if $m>-\infty $ and $\alpha <m.$ \newline
(vi) $\overset{\circ }{F_{\alpha }}\neq \emptyset $ if $\alpha
>m,f^{-1}(m)\cap $ $\overline{F}_{\alpha }$ is of first category if $%
m>-\infty $ and $\alpha <m.$
\end{corollary}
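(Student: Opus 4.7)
The plan is to establish the equivalence of (i)--(vi) by the cyclic chain $(\text{i})\Rightarrow(\text{ii})\Rightarrow(\text{iii})\Rightarrow(\text{iv})\Rightarrow(\text{v})\Rightarrow(\text{vi})\Rightarrow(\text{i})$. The last arrow is already furnished by Theorem~\ref{th9}, and $(\text{ii})\Rightarrow(\text{iii})$ is a vacuous weakening of the hypotheses. So the real work is (i)$\Rightarrow$(ii), followed by the short arithmetic chain (iii)$\Rightarrow$(iv)$\Rightarrow$(v)$\Rightarrow$(vi).

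For (i)$\Rightarrow$(ii), let $R$ denote the (residual) set of continuity points of $f$ and fix $\alpha$ with $\overset{\circ}{F_{\alpha}^{\prime}}=\emptyset$. Suppose, for contradiction, that $F_{\alpha}$ is not nowhere dense, and pick a nonempty open $U\subset\overline{F_{\alpha}}$. For any continuity point $x\in U\cap R$, the value $f(x)$ cannot exceed $\alpha$ (else continuity yields an open neighborhood $V$ of $x$ with $V\cap F_{\alpha}=\emptyset$, contradicting $V\cap U\subset\overline{F_{\alpha}}$), nor fall below $\alpha$ (else continuity yields an open neighborhood contained in $F_{\alpha}\subset F_{\alpha}^{\prime}$, contradicting $\overset{\circ}{F_{\alpha}^{\prime}}=\emptyset$). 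Hence $f(x)=\alpha$ for every such $x$, and $U\cap R\subset F_{\alpha}^{\prime}$. Applying Lemma~\ref{lm2} with $A=U\setminus R$ (first category) and the convex set $C=F_{\alpha}^{\prime}$ gives $U\subset F_{\alpha}^{\prime}$, contradicting $\overset{\circ}{F_{\alpha}^{\prime}}=\emptyset$.

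The remaining three arrows are bookkeeping around the defining dichotomy of $m=\mathcal{T}\limfunc{ess}\inf_{X}f$, namely that $F_{\alpha}$ is of first category for $\alpha\le m$ and of second category for $\alpha>m$. In (iii)$\Rightarrow$(iv): for $\alpha>m$, $F_{\alpha}$ is second category and hence not nowhere dense, so the contrapositive of (iii) forces $\overset{\circ}{F_{\alpha}^{\prime}}\neq\emptyset$; for $\alpha<m$, any open $U\subset F_{\alpha}^{\prime}$ would satisfy $f\le\alpha<m$ on $U$, embedding $U$ in the first-category set $F_{m}$ and violating the Baire property, so $\overset{\circ}{F_{\alpha}^{\prime}}=\emptyset$ and (iii) delivers the nowhere density of $F_{\alpha}$. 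In (iv)$\Rightarrow$(v): for $\alpha>m$, picking $\beta\in(m,\alpha)$ gives $\overset{\circ}{F_{\beta}^{\prime}}\subset\overset{\circ}{F_{\alpha}}$ since $F_{\beta}^{\prime}\subset F_{\alpha}$; the clause for $\alpha<m$ is identical to that in (iv). In (v)$\Rightarrow$(vi): if $F_{\alpha}$ is nowhere dense then so is $\overline{F_{\alpha}}$, which is thus of first category, absorbing the subset $f^{-1}(m)\cap\overline{F_{\alpha}}$.

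The principal obstacle is (i)$\Rightarrow$(ii): it is the sole step that converts residually-many pointwise continuity facts into interior content of a convex sublevel set, and the one place where Lemma~\ref{lm2} is really indispensable. Once that implication is in place, the rest of the cycle is formal manipulation of the definition of $m$ and the nesting relations $F_{\beta}^{\prime}\subset F_{\alpha}\subset F_{\alpha}^{\prime}$ for $\beta<\alpha$.
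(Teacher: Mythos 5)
Your proposal is correct and follows essentially the same route as the paper: the identical cyclic chain with Theorem~\ref{th9} supplying (vi)$\Rightarrow$(i), Lemma~\ref{lm2} applied to $U\setminus R$ and the convex set $F_{\alpha}^{\prime}$ for (i)$\Rightarrow$(ii), and the same bookkeeping with the definition of $m$ for the remaining arrows. The only cosmetic differences are that the paper proves (i)$\Rightarrow$(ii) as a contrapositive rather than by contradiction (your extra observation that $f(x)$ cannot fall below $\alpha$ is superfluous, since $f(x)\leq\alpha$ suffices), and it uses $F_{\beta}$ for $\beta\in(\alpha,m)$ where you use $F_{m}$.
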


\begin{proof}
That (vi) $\Rightarrow $ (i) follows from Theorem \ref{th9}. Thus, it
remains to show that (i) $\Rightarrow $ (ii) $\Rightarrow $ (iii) $%
\Rightarrow $ (iv) $\Rightarrow $ (v) $\Rightarrow $ (vi).

(i) $\Rightarrow $ (ii). Assume that the set $A$ of points of discontinuity
of $f$ is of first category. Observe that if $f$ is continuous at $x\in 
\overline{F}_{\alpha },$ then $x\in F_{\alpha }^{\prime }.$

Now, let $\alpha \in \Bbb{R}$ be such that $F_{\alpha }$ is \emph{not}
nowhere dense, so that $\overline{F}_{\alpha }$ has nonempty interior $U.$
If $x\in U\backslash A,$ then $f$ is continuous at $x$ and so, from the
above remark, $U\backslash A\subset F_{\alpha }^{\prime }.$ Since $F_{\alpha
}^{\prime }$ is convex, it follows from Lemma \ref{lm2} that $U\subset
F_{\alpha }^{\prime }$ and hence that $\overset{\circ }{F_{\alpha }^{\prime }%
}\neq \emptyset .$

Thus, $\overset{\circ }{F_{\alpha }^{\prime }}\neq \emptyset $ whenever $%
F_{\alpha }$ is not nowhere dense, which is equivalent to saying that $%
F_{\alpha }$ is nowhere dense whenever $\overset{\circ }{F_{\alpha }^{\prime
}}=\emptyset .$ This proves (ii).

(ii) $\Rightarrow $ (iii). Obvious.

(iii) $\Rightarrow $ (iv). If $\alpha >m,$ then $F_{\alpha }$ is of second
category by definition of $m.$ It follows that $\overset{\circ }{F_{\alpha
}^{\prime }}\neq \emptyset ,$ for otherwise $F_{\alpha }$ is nowhere dense
by (iii), which is a contradiction. To see that $F_{\alpha }$ is nowhere
dense if $m>-\infty $ and $\alpha <m,$ observe that $F_{\alpha }^{\prime
}\subset F_{\beta }$ for any $\beta \in (\alpha ,m)$ and $F_{\beta }$ is of
first category by definition of $m.$ Thus, $F_{\alpha }^{\prime }$ is of
first category and so has empty interior. By (iii), $F_{\alpha }$ is nowhere
dense.

(iv) $\Rightarrow $ (v). Since $F_{\beta }^{\prime }\subset F_{\alpha }$ for
every $\beta <\alpha ,$ it follows from (iv) that $\overset{\circ }{
F_{\alpha }}\neq \emptyset $ when $\alpha >m.$

(v) $\Rightarrow $ (vi) Obvious.
\end{proof}

On face value, (vi) is significantly weaker than (v), even though they are
equivalent; see Theorem \ref{th13}. Also, it is readily checked that $%
F_{\alpha }^{\prime }$ can be replaced by $F_{\alpha }$ in (iii) (use (iii) $%
\Rightarrow $ (v)) but this is not true in (ii): It is possible that the set
of points of discontinuity of $f$ is of first category and $m>-\infty ,$ yet 
$F_{m}$ (always of first category, hence with empty interior) is not nowhere
dense:

\begin{example}
\label{ex3}Let $X$ be an infinite dimensional separable Banach space and let 
$(x_{n})_{n\in \Bbb{N}}\subset X$ be a dense sequence. Set $K_{n}:=\limfunc{%
span}\{x_{1},...,x_{n}\},$ so that $K:=\cup _{n\in \Bbb{N}}K_{n}$ is a dense
subspace of $X$ and $K$ is of first category since $\dim K_{n}<\infty .$
After passing to a subsequence, we may assume $K_{n}\varsubsetneq K_{n+1}$
without changing $K.$ Now, let $(\alpha _{n})\subset \Bbb{R}$ be a strictly
increasing sequence such that $\lim_{n\rightarrow \infty }\alpha _{n}=1.$
Set $f(x)=1$ if $x\in X\backslash K,f(x)=\alpha _{1}$ if $x\in K_{1}$ and $%
f(x)=\alpha _{n}$ if $x\in K_{n}\backslash K_{n-1},n\geq 2.$ Since $K$ is of
first category, $m=1.$ If $\alpha >1,$ then $F_{\alpha }=X$ has nonempty
interior while $F_{1}=K$ is convex of first category, but everywhere dense.
If $\alpha _{1}<\alpha <1,$ there is a unique $n\in \Bbb{N}$ such that $%
\alpha _{n}<\alpha \leq \alpha _{n+1}$ and so $F_{\alpha }=K_{n}$ is (convex
and) nowhere dense. If $\alpha \leq \alpha _{1},$ then $F_{\alpha
}=\emptyset .$ This shows that $f$ is quasiconvex and, by (i) $%
\Leftrightarrow $(v) in Corollary \ref{cor10}, that the set of points of
discontinuity of $f$ is of first category. (By a direct verification, this
set is $K.$)
\end{example}

Since $\overset{\circ }{F_{\alpha }}\neq \emptyset $ or $\overset{\circ }{
F_{\alpha }^{\prime }}\neq \emptyset $ is unchanged when $\alpha $ is
increased, Corollary \ref{cor10} makes it clear that everything depends upon
the behavior of $f$ below the critical level $m$ and ``just above'' it. In
practice, this means the following:

\begin{corollary}
\label{cor11}If $f:X\rightarrow \Bbb{R}$ is a quasiconvex function whose set
of points of discontinuity is of first category and if $g:X\rightarrow \Bbb{R%
}$ is another quasiconvex function such that $G_{\alpha }:=\{x\in
X:g(x)<\alpha \}$ (or $G_{\alpha }^{\prime }:=\{x\in X:g(x)\leq \alpha \}$)
coincides with $F_{\alpha }$ (or $F_{\alpha }^{\prime }$) for every $\alpha
<\alpha _{0}$ with $\alpha _{0}>m:=\mathcal{T}\limfunc{ess}\inf_{X}f,$ then
the set of points of discontinuity of $g$ is of first category.
\end{corollary}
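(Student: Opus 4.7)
The plan is to reduce the claim to an application of Corollary \ref{cor10} to $g$, using the fact that only the behavior of the sublevel sets below (and slightly above) $m$ matters. As a preliminary remark, if the hypothesis is phrased in terms of $G_{\alpha}^{\prime }=F_{\alpha }^{\prime }$ for $\alpha <\alpha _{0},$ then since $G_{\alpha }=\cup _{\beta <\alpha }G_{\beta }^{\prime }$ and $F_{\alpha }=\cup _{\beta <\alpha }F_{\beta }^{\prime },$ we automatically get $G_{\alpha }=F_{\alpha }$ for every $\alpha <\alpha _{0}.$ So it suffices to treat the case $G_{\alpha }=F_{\alpha }$ for every $\alpha <\alpha _{0},$ with $\alpha _{0}>m.$

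Next, I would check that $\mathcal{T}\limfunc{ess}\inf_{X}g=m.$ Indeed, by the definition of $m,$ $F_{\alpha }$ is of first category if $\alpha <m$ and of second category if $\alpha >m.$ Since $G_{\alpha }=F_{\alpha }$ for every $\alpha <\alpha _{0}$ and $\alpha _{0}>m,$ this characterization transfers verbatim to the family $(G_{\alpha })_{\alpha <\alpha _{0}},$ which is enough to conclude that $\mathcal{T}\limfunc{ess}\inf_{X}g=m.$

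The heart of the proof is to verify condition (v) of Corollary \ref{cor10} for $g,$ using that it holds for $f$ (by (i)$\Rightarrow $(v) of that corollary). If $m>-\infty $ and $\alpha <m,$ then $\alpha <\alpha _{0},$ so $G_{\alpha }=F_{\alpha }$ is nowhere dense. If $m<\alpha <\alpha _{0},$ then $\overset{\circ }{G_{\alpha }}=\overset{\circ }{F_{\alpha }}\neq \emptyset .$ If $\alpha \geq \alpha _{0},$ pick any $\beta $ with $m<\beta <\alpha _{0};$ then $G_{\alpha }\supset G_{\beta }=F_{\beta }$ and $\overset{\circ }{F_{\beta }}\neq \emptyset ,$ so $\overset{\circ }{G_{\alpha }}\neq \emptyset $ as well. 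Thus (v) holds for $g,$ and by (v)$\Rightarrow $(i) of Corollary \ref{cor10}, the set of points of discontinuity of $g$ is of first category.

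There is no real obstacle here; the only mildly delicate point is the case $\alpha \geq \alpha _{0}$ in the verification of the nonempty-interior clause, which is resolved by monotonicity of $\alpha \mapsto G_{\alpha }$ together with the freedom to choose $\beta $ in the interval $(m,\alpha _{0}).$
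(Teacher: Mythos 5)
Your proposal is correct and follows essentially the same route as the paper, which also observes that $\mathcal{T}\limfunc{ess}\inf_{X}g=m$ and then invokes the equivalence of (i) and (v) in Corollary \ref{cor10} for $f$ and for $g$; you have merely written out the details (the reduction from $G_{\alpha }^{\prime }$ to $G_{\alpha },$ the computation of the essential infimum of $g,$ and the case split on $\alpha$) that the paper leaves to the reader. All of these details check out.
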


\begin{proof}
Just notice that $m=\mathcal{T}\limfunc{ess}\inf_{X}g$ and use the
equivalence between (i) and (v) (or (iv)) for $f$ and for $g$ in Corollary 
\ref{cor10}.
\end{proof}

The hypotheses about $g$ in Corollary \ref{cor11} are equivalent to $g=f$ on 
$F_{\alpha _{0}}$ \emph{and} $f=g$ on $G_{\alpha _{0}}$ and therefore
stronger than either of these requirements alone. (Use $f(x)=\inf \{\alpha
\in \Bbb{R}:x\in F_{\alpha }^{\prime }\}$ and likewise for $g.$) The points
of discontinuity of $f$ and $g$ are of course the same inside $F_{\alpha
_{0}}=G_{\alpha _{0}}$ where $f$ and $g$ coincide, but unlike in Corollary 
\ref{cor7}, $f$ and $g$ may be completely different on large subsets, which
makes the result rather unexpected. There are various obvious (and perhaps
less obvious) ways to generalize Corollary \ref{cor11}.

\section{Special cases and related results\label{applications}}

The first two theorems in this section give especially simple conditions
ensuring that the set of points of discontinuity of a quasiconvex function
is of first category. The first one is convenient for the proof of Theorem 
\ref{th14} further below.

\begin{theorem}
\label{th12}Let $f:X\rightarrow \Bbb{R}$ be quasiconvex and set $m:=\mathcal{%
T}\limfunc{ess}\inf_{X}f.$ If $\overset{\circ }{F_{\alpha }^{\prime }}\neq
\emptyset $ (or $\overset{\circ }{F_{\alpha }}\neq \emptyset $) when $\alpha
>m$ and $\inf_{X}f=m$ (always true if $m=-\infty $), the set of points of
discontinuity of $f$ is of first category.
\end{theorem}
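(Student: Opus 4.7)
My plan is to deduce Theorem~\ref{th12} directly from Theorem~\ref{th9} by verifying its two hypotheses under either of the two alternative assumptions. Recall Theorem~\ref{th9} asks for (a) $\mathring{F_\alpha} \neq \emptyset$ whenever $\alpha > m$, and (b) $f^{-1}(m) \cap \overline{F}_\alpha$ is of first category whenever $m > -\infty$ and $\alpha < m$.

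If the hypothesis is that $\mathring{F_\alpha} \neq \emptyset$ for $\alpha > m$, then (a) is immediate. If instead the hypothesis is that $\mathring{F_\alpha^\prime} \neq \emptyset$ for $\alpha > m$, then for any $\alpha > m$ I pick $\beta \in (m, \alpha)$ and observe $\mathring{F_\beta^\prime} \subset F_\beta^\prime \subset F_\alpha$, so $\mathring{F_\alpha} \neq \emptyset$, again yielding~(a).

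For (b), the assumption $\inf_X f = m$ (which is automatic when $m = -\infty$) is what does the work: when $m > -\infty$ and $\alpha < m$, every $x \in X$ satisfies $f(x) \geq \inf_X f = m > \alpha$, so $F_\alpha = \emptyset$ and hence $\overline{F}_\alpha = \emptyset$. Thus $f^{-1}(m) \cap \overline{F}_\alpha = \emptyset$ is trivially of first category, and (b) holds vacuously.

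With (a) and (b) established, Theorem~\ref{th9} applies and gives that the set of points of discontinuity of $f$ is of first category. There is no serious obstacle here; the content of the statement is really just the observation that when $f$ attains its essential infimum as its pointwise infimum, the second condition of Theorem~\ref{th9} becomes vacuous, and the first condition weakens to the stated openness hypothesis (in either of its two equivalent-looking forms).
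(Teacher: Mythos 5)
Your proposal is correct and is essentially the paper's argument: the paper deduces the theorem from the equivalence of (i) with (iv) (or (v)) in Corollary \ref{cor10} via the same observation that $\inf_{X}f=m$ forces $F_{\alpha }=\emptyset $ for $\alpha <m,$ and since Corollary \ref{cor10} routes (iv) $\Rightarrow $ (v) $\Rightarrow $ (vi) $\Rightarrow $ (i) through Theorem \ref{th9}, your direct verification of the hypotheses of Theorem \ref{th9} (including the inclusion $\overset{\circ }{F_{\beta }^{\prime }}\subset F_{\beta }^{\prime }\subset F_{\alpha }$ for $m<\beta <\alpha $) just unwinds the same chain.
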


\begin{proof}
Since $\inf_{X}f=m$ means $F_{\alpha }=\emptyset $ when $\alpha <m,$ this
follows from the equivalence between (i) and (iv) (or (v)) in Corollary \ref
{cor10}.
\end{proof}

\begin{example}
If $f:X\rightarrow \Bbb{R}$ is quasiconvex, $f\geq 0,f(0)=0$ and $f$ is
continuous at $0,$ the set of points of continuity of $f$ is of first
category. Indeed, the continuity of $f$ at $0$ implies that $F_{\alpha
}^{\prime }$ ($F_{\alpha }$) is a neighborhood of $0$ for every $\alpha >0$
and (hence) that $m=0.$
\end{example}

The first part of the next theorem follows at once from Corollary \ref{cor10}%
, but more information is obtained.

\begin{theorem}
\label{th13}Let $f:X\rightarrow \Bbb{R}$ be quasiconvex and set $m:=\mathcal{%
T}\limfunc{ess}\inf_{X}f.$ If $\overset{\circ }{F_{\alpha }}\neq \emptyset $
when $\alpha >m$ and either $m=-\infty $ or $m>-\infty $ and $f^{-1}(m)$ is
of first category, the set of points of discontinuity of $f$ is of first
category. Furthermore, $F_{m}$ and $f^{-1}(m)$ (and hence also $%
F_{m}^{\prime }$) are nowhere dense.
\end{theorem}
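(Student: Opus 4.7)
The plan is to split the theorem into its two assertions: first, that the set $D:=\{x\in X : f\text{ is discontinuous at }x\}$ is of first category, and second, that $F_{m}$, $f^{-1}(m)$ and $F_{m}^{\prime}$ are nowhere dense. The first assertion will follow directly from Corollary \ref{cor10}; the second from a short Baire-category argument that exploits the first.

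For the first assertion, I would verify condition (vi) of Corollary \ref{cor10}. The hypothesis $\overset{\circ }{F_{\alpha }}\neq \emptyset$ for $\alpha>m$ is exactly the first clause of (vi). For the second clause, if $m=-\infty$ there is nothing to check, while if $m>-\infty$ and $f^{-1}(m)$ is of first category, then for every $\alpha<m$ the set $f^{-1}(m)\cap \overline{F}_{\alpha}$ is contained in $f^{-1}(m)$ and is therefore of first category. Thus (vi) holds and Corollary \ref{cor10} yields that $D$ is of first category.

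For the second assertion, the case $m=-\infty$ is trivial, as $F_{m}=F_{m}^{\prime}=f^{-1}(m)=\emptyset$ since $f$ is real-valued. Assume $m>-\infty$. I first record that $F_{m}^{\prime}$ is of first category: by definition of $m$ each $F_{\alpha}$ with $\alpha<m$ is of first category, and $F_{m}=\bigcup\{F_{\alpha}:\alpha \in \Bbb{Q},\ \alpha<m\}$ is a countable union of such sets, so $F_{m}$ is of first category, and combined with the hypothesis on $f^{-1}(m)$ this gives $F_{m}^{\prime}=F_{m}\cup f^{-1}(m)$ of first category. Now suppose for contradiction that $\overline{F_{m}^{\prime}}$ has nonempty interior $U$. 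Since $X$ is Baire, $U$ is of second category, so $U\setminus D$ is of second category, and in particular nonempty. Pick $x\in U\setminus D$; then $f$ is continuous at $x$ and $x\in \overline{F_{m}^{\prime}}$. If $f(x)>m$, the open interval $(m,\infty)$ would be a neighborhood of $f(x)$, yielding a neighborhood $V$ of $x$ with $f(V)\subset (m,\infty)$, hence $V$ disjoint from $F_{m}^{\prime}$ --- contradicting $x\in \overline{F_{m}^{\prime}}$. Therefore $f(x)\leq m$, i.e., $x\in F_{m}^{\prime}$, giving $U\setminus D\subset F_{m}^{\prime}$, which is absurd. Hence $\overline{F_{m}^{\prime}}$ has empty interior, so $F_{m}^{\prime}$ is nowhere dense, and the same is true of its subsets $F_{m}$ and $f^{-1}(m)$.

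The only step that needs a moment of care is establishing that $F_{m}$ itself (not just $F_{\alpha}$ for $\alpha<m$) is of first category, which is handled by the countable-union observation. Beyond this there is no substantive obstacle: the crux is simply that continuity of $f$ at a point of $\overline{F_{m}^{\prime}}$ forces membership in $F_{m}^{\prime}$, which closes the argument against the Baire hypothesis.
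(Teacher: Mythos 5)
Your proof is correct. The first assertion is handled exactly as in the paper: under the stated hypotheses, condition (vi) of Corollary \ref{cor10} is verified trivially because $f^{-1}(m)\cap \overline{F}_{\alpha }\subset f^{-1}(m)$. For the second assertion you take a genuinely different, and somewhat more economical, route. The paper treats the two sets separately: it observes that $F_{m}^{\prime }=F_{m}\cup f^{-1}(m)$ is of first category, hence $\overset{\circ }{F_{m}^{\prime }}=\emptyset $, and then invokes the equivalence (i) $\Leftrightarrow $ (ii) of Corollary \ref{cor10} to conclude that $F_{m}$ is nowhere dense --- an appeal whose proof ultimately rests on the convexity of $F_{m}^{\prime }$ via Lemma \ref{lm2}; it then runs a second, separate argument showing $\overline{f^{-1}(m)}\subset f^{-1}(m)\cup A$ is of first category, hence has empty interior. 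You instead prove directly that $F_{m}^{\prime }$ itself is nowhere dense: continuity of $f$ at a point of $\overline{F_{m}^{\prime }}$ forces $f(x)\leq m$, so a nonempty interior $U$ of $\overline{F_{m}^{\prime }}$ would put the second-category set $U\setminus D$ inside the first-category set $F_{m}^{\prime }$, a contradiction; the statements for $F_{m}$ and $f^{-1}(m)$ then follow since subsets of nowhere dense sets are nowhere dense. This unifies the three conclusions in one Baire-category argument and, because you already know $F_{m}^{\prime }$ is of first category, dispenses with Lemma \ref{lm2} and with any use of convexity at that stage. The trade-off is negligible in length; your version is marginally more self-contained, while the paper's version recycles the general implication (i) $\Rightarrow $ (ii) of Corollary \ref{cor10} that it has already established.
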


\begin{proof}
Condition (vi) of Corollary \ref{cor10} is trivially satisfied, which
settles the issue about the points of discontinuity. To show that $F_{m}$ is
nowhere dense, observe that $F_{m}^{\prime }=f^{-1}(m)\cup F_{m}$ is of
first category (since $F_{m}$ is; see the proof of Corollary \ref{cor7}).
Thus, $\overset{\circ }{F_{m}^{\prime }}=\emptyset $ and the conclusion
follows from the equivalence between (i) and (ii) in Corollary \ref{cor10}.

It remains to show that $f^{-1}(m)$ is nowhere dense. Observe that if $x\in 
\overline{f^{-1}(m)},$ and $f(x)\neq m,$ then $f$ is not continuous at $x.$
Thus, $\overline{f^{-1}(m)}\subset f^{-1}(m)\cup A,$ where $A$ is the set of
points of discontinuity of $f.$ Since $A$ is of first category, it follows
that $\overline{f^{-1}(m)}$ is of first category. Therefore, it has empty
interior and $f^{-1}(m)$ is nowhere dense.
\end{proof}

If a function $f:X\rightarrow \Bbb{R}$ is continuous, its level sets $%
f^{-1}(\alpha )$ are closed. As a result, either they have nonempty interior
or they are nowhere dense. Theorem \ref{th14} below shows that, with at most
one exception, the same thing is true for quasiconvex functions whose set of
points of discontinuity is of first category. The proof is based primarily
on Theorem \ref{th12}. In that regard, see Remark \ref{rm3}.

\begin{theorem}
\label{th14}Suppose that $f:X\rightarrow \Bbb{R}$ is quasiconvex and that
the set of points of discontinuity of $f$ is of first category. Set $m:=%
\mathcal{T}\limfunc{ess}\inf_{X}f.$ If $\alpha \in \Bbb{R}$ and $\alpha \neq
m$ (always true if $m=-\infty $), then $f^{-1}(\alpha )$ is nowhere dense or
has nonempty interior.
\end{theorem}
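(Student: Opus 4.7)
The plan is to split into the two cases $\alpha<m$ and $\alpha>m$; the first is essentially immediate from Corollary~\ref{cor10}, while the second, which I expect to be the main obstacle, requires combining the topological density of $f^{-1}(\alpha)$ with the convex structure of the sublevel sets via Lemmas~\ref{lm2} and~\ref{lm3}.

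For $\alpha<m$ (which forces $m>-\infty$), I would pick any $\beta\in(\alpha,m)$. The hypothesis on $f$ is condition (i) of Corollary~\ref{cor10}, so condition~(v) also holds; in particular, $F_\beta$ is nowhere dense. Since $F_\alpha'\subset F_\beta$, the set $F_\alpha'$, and hence its subset $f^{-1}(\alpha)$, is nowhere dense.

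For $\alpha>m$, I would argue contrapositively: suppose $f^{-1}(\alpha)$ is not nowhere dense, and set $U:=\overset{\circ }{\overline{f^{-1}(\alpha)}}\neq\emptyset$. Let $A$ be the set of points of discontinuity of $f$, which is of first category by hypothesis. The key claim is that $f(x)=\alpha$ for every $x\in U\setminus A$. Indeed, $x\in\overline{f^{-1}(\alpha)}$ means that every neighborhood of $x$ meets $f^{-1}(\alpha)$; combined with the continuity of $f$ at $x$, this forces $f(x)=\alpha$, since otherwise a small enough neighborhood of $x$ would have $f$-image bounded away from $\alpha$. Consequently $U\setminus A\subset f^{-1}(\alpha)=F_\alpha'\setminus F_\alpha$, which yields both $U\setminus A\subset F_\alpha'$ and $U\cap F_\alpha\subset A$.

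From $U\setminus A\subset F_\alpha'$, the convexity of $F_\alpha'$, and Lemma~\ref{lm2}, I obtain $U\subset F_\alpha'$. On the other hand, since $\alpha>m$, the convex set $F_\alpha$ is of second category by definition of $m$; if $U\cap F_\alpha$ were nonempty, part~(i) of Lemma~\ref{lm3} applied to $C:=F_\alpha$ would force $U\cap F_\alpha$ to be of second category, contradicting $U\cap F_\alpha\subset A$. Therefore $U\cap F_\alpha=\emptyset$, and $U\subset F_\alpha'\setminus F_\alpha=f^{-1}(\alpha)$, so $f^{-1}(\alpha)$ has nonempty interior. The main difficulty is precisely the need to extract simultaneously topological density information (to pin down $f(x)=\alpha$ on $U\setminus A$) and convexity information (to apply Lemma~\ref{lm2} and Lemma~\ref{lm3}(i)) from the single hypothesis that the discontinuity set of $f$ is of first category.
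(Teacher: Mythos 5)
Your proof is correct, but for the main case $\alpha >m$ it takes a genuinely different and more direct route than the paper. The paper first reduces the problem to showing that $f^{-1}(\alpha )$ is of \emph{first category} whenever it has empty interior (re-using an argument from the proof of Theorem \ref{th13} to pass from ``first category'' to ``nowhere dense''), and establishes this by introducing the auxiliary quasiconvex functions $h_{\alpha }^{-}\circ f$ and $h_{\alpha }^{+}\circ f$, applying Theorem \ref{th12} to each, and checking that every point of $f^{-1}(\alpha )$ is a point of discontinuity of at least one of the two. You instead argue contrapositively on $U:=\overset{\circ }{\overline{f^{-1}(\alpha )}}$: continuity at the points of $U\backslash A$ forces $f=\alpha $ there, Lemma \ref{lm2} then gives $U\subset F_{\alpha }^{\prime }$, and Lemma \ref{lm3}(i) applied to the convex set $F_{\alpha }$ (of second category precisely because $\alpha >m$, which is where the exclusion $\alpha \neq m$ enters) rules out $U\cap F_{\alpha }\neq \emptyset $ since that intersection sits inside the first-category set $A$; hence $U\subset F_{\alpha }^{\prime }\backslash F_{\alpha }=f^{-1}(\alpha )$ and the level set has nonempty interior. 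Every step checks out, and the case $\alpha <m$ coincides with the paper's. What the paper's longer route buys is the intermediate fact that $f^{-1}(\alpha )$ with empty interior is of first category (contained in the union of the discontinuity sets of $h_{\alpha }^{\pm }\circ f$), which the author exploits afterwards for the $\sigma $-porosity remark; your argument does not yield that refinement, but it proves the stated theorem with less machinery and, in particular, bypasses the subtlety discussed in Remark \ref{rm3}.
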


\begin{proof}
If $m>-\infty $ and $\alpha <m,$ then $f^{-1}(\alpha )\subset F_{\beta }$
for any $\beta \in (\alpha ,m),$ whence $f^{-1}(\alpha )$ is nowhere dense
by the equivalence between (i) and (v) of Corollary \ref{cor10}. Thus, from
now on, $\alpha >m$ and it is assumed that $f^{-1}(\alpha )$ has empty
interior. The problem is to show that $f^{-1}(\alpha )$ is actually nowhere
dense.

In the proof of Theorem \ref{th13}, the fact that $f^{-1}(m)$ is nowhere
dense if it is of first category depends only on the set of points of
discontinuity of $f$ being of first category. Thus, the same thing is true
when $m$ is replaced by any other value $\alpha $ as soon as $f^{-1}(\alpha
) $ is of first category. Accordingly, it suffices to show that $%
f^{-1}(\alpha )$ is of first category when $\alpha >m$ and $f^{-1}(\alpha )$
has empty interior. This is done below.

If $h:\Bbb{R}\rightarrow \Bbb{R}$ is nondecreasing, $h\circ f$ is
quasiconvex. In particular, this is true of $h_{\alpha }^{-}\circ f$ and $%
h_{\alpha }^{+}\circ f,$ where $h_{\alpha }^{-}(t):=0$ if $0\leq t\leq
\alpha ,h_{\alpha }^{-}(t):=1$ if $t>\alpha $ and $h_{\alpha }^{+}(t):=0$ if 
$0\leq t<\alpha ,h_{\alpha }^{+}(t):=1$ if $t\geq \alpha .$ Below, we prove
that the sets of points of discontinuity of $h_{\alpha }^{-}\circ f$ and $%
h_{\alpha }^{+}\circ f$ are of first category and, next, that if $x\in
f^{-1}(\alpha ),$ at least one among $h_{\alpha }^{-}\circ f$ and $h_{\alpha
}^{+}\circ f$ is not continuous at $x.$ If so, $f^{-1}(\alpha )$ is
contained in the union of two sets of first category and the proof is
complete.

\textit{Step 1: }The set of points of discontinuity of $h_{\alpha }^{-}\circ
f$ is of first category.

If $a>0,$ the set $\Phi _{a}:=\{x\in X:h_{\alpha }^{-}\circ f<a\}$ contains
the set $F_{\alpha }$ and so it has nonempty interior since $\overset{\circ 
}{F_{\alpha }}\neq \emptyset $ by the equivalence of (i) and (v) of
Corollary \ref{cor10} (recall $\alpha >m$). In particular, $\Phi _{a}$ is of
second category, whence $\mathcal{T}\limfunc{ess}\inf_{X}(h_{\alpha
}^{-}\circ f)\leq a.$ Since $a>0$ is arbitrary, $\mathcal{T}\limfunc{ess}%
\inf_{X}(h_{\alpha }^{-}\circ f)\leq 0.$ Thus, $\mathcal{T}\limfunc{ess}
\inf_{X}(h_{\alpha }^{-}\circ f)=0$ since $h_{\alpha }^{-}\circ f\geq 0.$
Furthermore, $\inf_{X}(h_{\alpha }^{-}\circ f)=0$ since $0\leq
\inf_{X}(h_{\alpha }^{-}\circ f)\leq \mathcal{T}\limfunc{ess}
\inf_{X}(h_{\alpha }^{-}\circ f)=0.$ If follows that Theorem \ref{th12} is
applicable to $h_{\alpha }^{-}\circ f.$ This completes Step 1.

\textit{Step 2:} The set of points of discontinuity of $h_{\alpha }^{+}\circ
f$ is of first category.

All the arguments of Step 1 can be repeated verbatim.

\textit{Step 3:} If $x\in f^{-1}(\alpha ),$ then either $h_{\alpha
}^{-}\circ f$ or $h_{\alpha }^{+}\circ f$ is not continuous at $x.$

By contradiction, if both functions are continuous at $x,$ there is a
neighborhood $U$ of $x$ such that $U\subset (h_{\alpha }^{-}\circ
f)^{-1}(-1/2,1/2)$ (because $h_{\alpha }^{-}(\alpha )=0$) and $U\subset
(h_{\alpha }^{+}\circ f)^{-1}(1/2,3/2)$ (because $h_{\alpha }^{+}(\alpha )=1$
). Then, $h_{\alpha }^{-}(f(y))=0$ and $h_{\alpha }^{+}(f(y))=1$ for every $%
y\in U,$ i.e., $f(y)\leq \alpha $ and $f(y)\geq \alpha ,$ respectively, for
every $y\in U.$ Therefore $f=$ $\alpha $ on $U,$ which contradicts the
assumption that $f^{-1}(\alpha )$ has empty interior.
\end{proof}

\begin{remark}
\label{rm3} In Theorem \ref{th14}, suppose that $f$ is lsc (usc). Then, $%
h_{\alpha }^{+}\circ f$ ($h_{\alpha }^{-}\circ f$) in the proof need not be
lsc or usc. Therefore, even when $f$ is lsc or usc, the theorem does not
follow from the fact that the set of points of discontinuity of an lsc or
usc function is of first category. \newline
\end{remark}

If $f$ is a discontinuous linear form on $X,$ then $f^{-1}(\alpha )$ has
empty interior but is everywhere dense for every $\alpha \in \Bbb{R}.$ In
Example \ref{ex2}, $m=1$ and $f^{-1}(0)=H$ is of first category but also
everywhere dense. Thus, Theorem \ref{th14} breaks down quite easily when the
set of points of discontinuity of $f$ is not of first category.

\ If $X=\Bbb{R}^{N},$ then $\alpha =m$ need not be ruled out in Theorem \ref
{th14} because the set of points of discontinuity of a quasiconvex function
on $\Bbb{R}^{N}$ is always of first category (so, Step 1 and Step 2 of the
proof are unnecessary irrespective of $\alpha $). However, $f^{-1}(m)$ may
be of second category with empty interior when $\dim X=\infty ;$ see Example 
\ref{ex3}.

\begin{remark}
By using the $\sigma $-porosity result in \cite[Theorem 19]{BoWa05}, duly
extended to the non lsc case (Remark \ref{rm2}) in Step 3 of proof of
Theorem \ref{th14} but now with arbitrary $\alpha $ (everything else can be
ignored), it follows that if $X=\Bbb{R}^{N},$ every level set $f^{-1}(\alpha
)$ with empty interior is $\sigma $-porous.
\end{remark}

If $X$ is separable, $f^{-1}(\alpha )$ in Theorem \ref{th14} cannot have
nonempty interior for more than countably many values $\alpha \in \Bbb{R\ }$
since the sets $f^{-1}(\alpha )$ are disjoint. Elementary one-dimensional
examples show that countably many level sets $f^{-1}(\alpha )$ may indeed
have nonempty interior.

In Remark \ref{rm2}, we pointed out that a quasiconvex function on $\Bbb{R}
^{N}$ has the same points of (dis)continuity as its lsc hull $\underline{f}$
but that this is generally false when $\Bbb{R}^{N}$ is replaced by an
infinite dimensional tvs $X.$ Below, we show that this is still true if the
set of points of discontinuity of $f$ is of first category, but first a
minor a technicality must be clarified: Even when $f$ is real-valued, it may
happen that $\underline{f}=-\infty $ at some points. This is the reason why
the actual statement is a little more technical than the short version just
mentioned.

\begin{theorem}
\label{th15}Let $f:X\rightarrow \Bbb{R}$ be quasiconvex and let $\underline{f%
}$ denote its lsc hull. If the set $A$ of points of discontinuity of $f$ is
of first category, then $A=\{x\in X:\underline{f}(x)=-\infty \}\cup \{x\in X:%
\underline{f}(x)\in \Bbb{R},\underline{f}$ is discontinuous at $x\}.$
\end{theorem}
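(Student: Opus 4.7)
The plan is to prove both inclusions separately. Set $B_1 := \{x \in X : \underline{f}(x) = -\infty\}$ and $B_2 := \{x \in X : \underline{f}(x) \in \mathbb{R},\ \underline{f}\ \text{is discontinuous at}\ x\}$; the goal is $A = B_1 \cup B_2$. The inclusion $B_1 \cup B_2 \subset A$ is purely general and does not require the first-category hypothesis, whereas the reverse inclusion is where quasiconvexity and the smallness of $A$ come into play.

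For $B_1 \cup B_2 \subset A$ I would argue by contrapositive: assume $f$ is continuous at $x$. Since $f$ is real-valued, $f(x) \in \mathbb{R}$, and the standard identity $\underline{f}(y) = \liminf_{z \to y} f(z)$ yields $\underline{f}(x) = f(x) \in \mathbb{R}$, so $x \notin B_1$. Moreover, $\underline{f} \leq f$ pointwise, so $\limsup_{y \to x} \underline{f}(y) \leq \limsup_{y \to x} f(y) = f(x) = \underline{f}(x)$, and combined with the lower semicontinuity of $\underline{f}$ this forces $\underline{f}$ to be continuous at $x$; hence $x \notin B_2$.

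For the reverse inclusion I would again use contraposition. Suppose $m_0 := \underline{f}(x) \in \mathbb{R}$ and $\underline{f}$ is continuous at $x$; I need to show $f$ is continuous at $x$. Fix $\varepsilon > 0$ and pick an open neighborhood $U$ of $x$ with $|\underline{f} - m_0| < \varepsilon$ on $U$. The lower bound $f(y) \geq \underline{f}(y) > m_0 - \varepsilon$ is immediate for $y \in U$. For the upper bound, note that at every point $y \in U \setminus A$ (where $f$ is continuous) one has $f(y) = \underline{f}(y) < m_0 + \varepsilon$, so $U \setminus A \subset F_{m_0 + \varepsilon}$. Since $F_{m_0 + \varepsilon}$ is convex by quasiconvexity, $U$ is open, and $A$ is of first category by hypothesis, Lemma \ref{lm2} promotes this inclusion to $U \subset F_{m_0 + \varepsilon}$. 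Therefore $|f(y) - m_0| < \varepsilon$ throughout $U$; letting $\varepsilon \to 0$ forces $f(x) = m_0$ and yields continuity of $f$ at $x$.

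The only real obstacle is the upper bound $f < m_0 + \varepsilon$ on all of $U$: a priori this is only guaranteed on the residual subset $U \setminus A$ where $f$ agrees with $\underline{f}$. Lemma \ref{lm2}, which trades the convexity of the sublevel set against the first-category assumption on $A$, is exactly what is needed to pass from the residual subset to the whole neighborhood, and this is where the standing hypothesis that $A$ is of first category is used in an essential way. Everything else reduces to routine manipulation of the lsc hull.
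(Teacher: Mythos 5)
Your proof is correct and follows essentially the same route as the paper: the easy inclusion is the observation that continuity of $f$ at $x$ forces $\underline{f}(x)=f(x)$ and continuity of $\underline{f}$ there, and the reverse inclusion hinges on exactly the paper's mechanism, namely that the neighborhood supplied by the continuity of $\underline{f}$ at $x$ satisfies $U\backslash A\subset F_{m_{0}+\varepsilon }$, after which Lemma \ref{lm2} (convexity of the sublevel set against the first-category hypothesis on $A$) upgrades this to $U\subset F_{m_{0}+\varepsilon }$. The paper routes the same idea through the interior of $\overline{F}_{\beta }$ rather than through the neighborhood $U$ directly, but the substance is identical and your version is, if anything, slightly more streamlined.
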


\begin{proof}
A routine verification shows that 
\begin{equation}
\underline{f}(x)=\inf \{\alpha \in \Bbb{R}:x\in \overline{F}_{\alpha }\}.
\label{8}
\end{equation}
We shall prove that $f$ is continuous at $x$ if and only if $\underline{f}
(x)\in \Bbb{R}$ and $\underline{f}$ is continuous at $x,$ which is
equivalent to the claim made in the theorem.

\textit{Step 1:} Assume that $f$ is continuous at $x.$

Given $\varepsilon >0,$ set $I_{\varepsilon }:(f(x)-\varepsilon
,f(x)+\varepsilon ),$ so that there is an open neighborhood $W_{\varepsilon
} $ of $x$ such that $f(W_{\varepsilon })\subset I_{\varepsilon }.$ Since $%
\underline{f}\leq f,$ it is obvious that $\underline{f}(y)<f(x)+\varepsilon $
for every $y\in W_{\varepsilon }.$ We claim that $\underline{f}(y)\geq
f(x)-\varepsilon $ for every $y\in W_{\varepsilon }.$ Otherwise, there is $%
y\in W_{\varepsilon }$ such that $\underline{f}(y)<f(x)-\varepsilon $ and so
there is $\alpha <f(x)-\varepsilon $ such that $y\in $ $\overline{F}_{\alpha
}.$ If so, $W_{\varepsilon }\cap F_{\alpha }\neq \emptyset $ since $%
W_{\varepsilon }$ is a neighborhood of $y,$ which is absurd since $\alpha
<f(x)-\varepsilon $ and $f(W_{\varepsilon })\subset I_{\varepsilon }.$ Thus, 
$\underline{f}(W_{\varepsilon })\subset [f(x)-\varepsilon ,f(x)+\varepsilon
),$ which show that $\underline{f}(x)\in \Bbb{R}$ (even $\underline{f}
(x)=f(x)$) and that $\underline{f}$ is continuous at $x.$

\textit{Step 2:} Assume $\underline{f}(x)\in \Bbb{R}$ and $\underline{f}$ is
continuous at $x.$

Let $\beta >\underline{f}(x)$ be given, so that $W:=\underline{f}%
^{-1}((-\infty ,$ $\beta ))$ is a neighborhood of $x.$ By definition of $%
\underline{f}$ in (\ref{8}), $W\subset \overline{F}_{\beta }$ and so $%
\overline{F}_{\beta }$ has nonempty interior containing $x.$ Note also that
if $y\in \overline{F}_{\beta }$ and $y\notin F_{\beta }^{\prime },$ then $f$
is not continuous at $y,$ so that $\overline{F}_{\beta }\subset F_{\beta
}^{\prime }\cup A$ where $A$ is the set of points of discontinuity of $f.$
Altogether, this yields $\overset{\circ }{\overline{F}_{\beta }}\subset
F_{\beta }^{\prime }\cup A.$ Since $A$ is of first category, then $B:=A\cap
(X\backslash F_{\beta }^{\prime })$ is of first category. Therefore, $%
\overset{\circ }{\overline{F}_{\beta }}\backslash B\subset F_{\beta
}^{\prime }$ and so, since $F_{\beta }^{\prime }$ is convex, it follows from
Lemma \ref{lm2} that $\overset{\circ }{\overline{F}_{\beta }}\subset
F_{\beta }^{\prime }.$ Since $x\in \overset{\circ }{\overline{F}_{\beta }},$
this shows that $F_{\beta }^{\prime }$ is a neighborhood of $x$ for every $%
\beta >\underline{f}(x)$ and (hence) that $f(x)=\underline{f}(x)$ since $%
\underline{f}\leq f.$

In summary, $F_{\beta }^{\prime }$ is a neighborhood of $x$ for every $\beta
>f(x).$ On the other hand, since $\underline{f}\leq f$ and $\underline{f}$
is continuous at $x,$ it follows that $f^{-1}([\alpha ,\infty ))$ $\supset 
\underline{f}^{-1}([\alpha ,\infty )),$ a neighborhood of $\underline{f}
(x)=f(x)$ for every $\alpha >f(x).$ Altogether, $f^{-1}([\alpha ,\beta ])$
is a neighborhood of $x$ for every $\alpha <f(x)<\beta ,$ whence $f$ is
continuous at $x.$
\end{proof}

\section{Complements\label{complements}}

We complete this paper with two complementary results of independent
interest. The first one clarifies the connection between the conditions for
the set of points of discontinuity of a quasiconvex function to be first
category (1) knowing that this is true for an equivalent quasiconvex
function (Corollary \ref{cor7}) and (2) without knowing that this is true
for an equivalent quasiconvex function (Sections \ref{continuity} and \ref
{applications}).

The second result was motivated by the conditions (ii) and (ii') of Lemma 
\ref{lm1}, that suggest a connection between quasiconvex and quasicontinuous
functions.

The sets $\overset{\circ }{F_{\alpha }}$ or $\overset{\circ }{F_{\alpha
}^{\prime }}$ are involved repeatedly in Sections \ref{continuity} and \ref
{applications}. For example, in Corollary \ref{cor10}, it is shown that the
set of points of discontinuity of $f$ is of first category if and only if $%
F_{\alpha }$ is nowhere dense whenever $\overset{\circ }{F_{\alpha }^{\prime
}}=\emptyset .$ Assuming this, Corollary \ref{cor7} gives a necessary and
sufficient condition for the set of points of continuity of another
quasiconvex function $g\sim _{\mathcal{T}}f$ to be of first category (after
exchanging the roles of $f$ and $g$ in the notation), but this condition
makes no implicit or explicit reference to $\overset{\circ }{F_{\alpha
}^{\prime }}$ or to its analog for $g.$ Yet, in a self-explanatory notation,
it must somehow imply that $G_{\alpha }$ is nowhere dense whenever $\overset{%
\circ }{G_{\alpha }^{\prime }}=\emptyset .$

This is clarified in Theorem \ref{th17} below, by showing that $\overset{%
\circ }{F_{\alpha }}$ and $\overset{\circ }{F_{\alpha }^{\prime }}$ are
unchanged after $f$ is replaced by any equivalent quasiconvex function. To
see this, we need a variant of Lemma \ref{lm2}.

\begin{lemma}
\label{lm16} Let $U\subset X$ be open and convex and let $A_{1},A_{2}\subset
X$ be of first category with $A_{2}\cap U=\emptyset $. If $G:=(U\backslash
A_{1})\cup A_{2}$ is convex, then $A_{1}\cap U=\emptyset $ and $\overset{%
\circ }{G}=U.$\newline
\end{lemma}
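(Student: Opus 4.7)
The plan is to apply Lemma \ref{lm2} twice, once to get $U \subset G$ (from which $A_1 \cap U = \emptyset$ follows immediately) and once to show $\mathring{G} \subset U$.

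First I would observe that $U \setminus A_1 \subset G$ directly from the definition of $G$. Since $U$ is open, $G$ is convex by hypothesis, and $A_1$ is of first category, Lemma \ref{lm2} (with $C := G$ and $A := A_1$) yields $U \subset G$. Now for any $x \in A_1 \cap U$, the inclusion $x \in U \subset G = (U \setminus A_1) \cup A_2$ forces $x \in A_2$ (since $x \notin U \setminus A_1$), contradicting $A_2 \cap U = \emptyset$. Hence $A_1 \cap U = \emptyset$, which simplifies the description to $G = U \cup A_2$ with $U$ and $A_2$ disjoint.

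For the second claim, the inclusion $U \subset \mathring{G}$ is automatic since $U$ is open and contained in $G$. For the reverse inclusion, I would take any $x \in \mathring{G}$ and pick an open neighborhood $V$ of $x$ with $V \subset G = U \cup A_2$. Then $V \setminus A_2 \subset U$, and since $V$ is open, $U$ is convex, and $A_2$ is of first category, Lemma \ref{lm2} (now with $U$ in the role of $V$ and $A_2$ in the role of $A$, after relabeling) gives $V \subset U$. In particular $x \in U$, so $\mathring{G} \subset U$ and equality holds.

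The argument is essentially a double application of Lemma \ref{lm2}; there is no real obstacle once one recognizes that the hypothesis $A_2 \cap U = \emptyset$ is exactly what is needed so that the second step works with $U$ (rather than some larger convex set) as the target. The slightly delicate point is purely notational: making sure that each invocation of Lemma \ref{lm2} matches the stated form with an open set, a convex set, and a first-category set playing their prescribed roles.
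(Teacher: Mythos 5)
Your proof is correct. Both applications of Lemma \ref{lm2} are legitimate: in the first, $U$ is open, $G$ is convex by hypothesis, $A_{1}$ is of first category and $U\backslash A_{1}\subset G$ by construction, so $U\subset G$; the element-chasing then forces any point of $A_{1}\cap U$ into $A_{2}\cap U=\emptyset $. In the second, $V$ is open, $U$ is convex, $A_{2}$ is of first category and $V\backslash A_{2}\subset U$ since $V\subset G=U\cup A_{2}$, so $V\subset U$. The degenerate case $U=\emptyset $ is also absorbed without special treatment.

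The route differs from the paper's in both halves. For $A_{1}\cap U=\emptyset ,$ the paper does not invoke Lemma \ref{lm2} as a black box; it re-runs the symmetric-point argument from that lemma's proof to produce $x,y\in U\backslash A_{1}$ whose midpoint lies in $A_{1}\cap U$ and hence outside $G,$ contradicting convexity of $G$ directly. For $\overset{\circ }{G}=U,$ the paper writes $\overset{\circ }{G}=U\cup A_{3}$ with $A_{3}=A_{2}\cap \overset{\circ }{G}$ and then applies the first part of the lemma to itself, with $\overset{\circ }{G},U,A_{3},\emptyset $ in the roles of $U,G,A_{1},A_{2}$ -- which requires noting that $\overset{\circ }{G}$ is open and convex. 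Your version, which instead applies Lemma \ref{lm2} a second time with $U$ as the convex target, is shorter, avoids the self-referential step, and does not need the convexity of $\overset{\circ }{G}.$ Both arguments ultimately rest on the same Baire-category midpoint mechanism, so the gain is organizational rather than conceptual, but your packaging is the cleaner of the two.
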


\begin{proof}
(i) The result is trivial if $U=\emptyset ,$ for then $G=A_{2}$ is of first
category, so that $\overset{\circ }{G}=\emptyset .$ From now on, $U\neq
\emptyset .$ We first prove that if $A_{1}\cap U\neq \emptyset ,$ there are $%
x,y\in U\backslash A_{1}\subset G$ such that $[x,y]\subset U$ contains a
point $z\in A_{1}.$ Since $A_{2}\cap (U\backslash A_{1})\subset A_{2}\cap
U=\emptyset ,$ it follows that $z\notin G,$ which contradicts the convexity
of $G.$

To prove the claim, suppose that $A_{1}\cap U\neq \emptyset .$ After
translation, it is not restrictive to assume $0\in A_{1}\cap U.$ Thus, $0\in
A_{1}\cup (-A_{1}),$ a set of first category. The set $V:=U\cap (-U)\subset
U $ is an open neighborhood of $0.$ Since $X$ is a Baire space, there is a
point $x\in V\backslash (A_{1}\cup (-A_{1})).$ Equivalently, both $x$ and $%
-x $ are in $V\backslash A_{1}\subset U\backslash A_{1}$ and their midpoint $%
0$ is in $A_{1}.$ This proves the claim with $y=-x.$

At this stage, we have shown that $A_{1}\cap U=\emptyset ,$ so that $%
U\backslash A_{1}=U$ and $G=U\cup A_{2}.$ Thus, $U\subset \overset{\circ }{G}
.$ Also, $\overset{\circ }{G}\subset G$ and so $\overset{\circ }{G}=U\cup
A_{3}$ with $A_{3}=A_{2}\cap \overset{\circ }{G}$ of first category. Since $%
A_{2}\cap U=\emptyset ,$ hence $A_{3}\cap U=\emptyset ,$ this also reads $U=%
\overset{\circ }{G}\backslash A_{3}.$ Therefore, by changing $U$ into $%
\overset{\circ }{G}$ (convex), $G$ into $U,A_{1}$ into $A_{3}$ and $A_{2}$
into $\emptyset $ above, it follows that $\overset{\circ }{G}\subset U$ and
so $\overset{\circ }{G}=U.$
\end{proof}

\begin{theorem}
\label{th17} If $f:X\rightarrow \overline{\Bbb{R}}$ is quasiconvex, then $%
\overset{\circ }{F_{\alpha }}$ and $\overset{\circ }{F_{\alpha }^{\prime }}$
are unchanged when $f$ is replaced by another \emph{quasiconvex} function $%
g\sim _{\mathcal{T}}f.$
\end{theorem}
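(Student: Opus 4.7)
The plan is to reduce everything to a single observation: setting $A := \{x \in X : f(x) \neq g(x)\}$, which is of first category since $f \sim_{\mathcal{T}} g$, the symmetric difference between each sublevel set of $f$ and the corresponding one of $g$ lies inside $A$, and Lemma~\ref{lm2} then forces their interiors to agree.

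Concretely, I would fix $\alpha \in \Bbb{R}$, set $U := \overset{\circ}{F_\alpha}$, and note that $U \setminus A \subset F_\alpha \setminus A \subset G_\alpha$, since $f$ and $g$ coincide off $A$. As $U$ is open, $A$ is of first category, and $G_\alpha$ is convex (being a sublevel set of the quasiconvex function $g$), Lemma~\ref{lm2} yields $U \subset G_\alpha$, whence $U \subset \overset{\circ}{G_\alpha}$. Interchanging the roles of $f$ and $g$ gives the opposite inclusion, so $\overset{\circ}{F_\alpha} = \overset{\circ}{G_\alpha}$. The same argument, with $F_\alpha'$ and $G_\alpha'$ in place of $F_\alpha$ and $G_\alpha$, handles the level-set version.

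The only nontrivial point is recognizing that Lemma~\ref{lm2} packages exactly the Baire-category step needed to pass from ``contained in $G_\alpha$ off a first-category set'' to ``contained in $G_\alpha$''; once that is spotted, there is no real obstacle. An alternative, more symmetric route would invoke the stronger Lemma~\ref{lm16} with $A_1 := U \setminus G_\alpha \subset A$ and $A_2 := G_\alpha \setminus U$, obtaining $\overset{\circ}{G_\alpha} = U$ in a single stroke. This would require verifying that $A_2$ is of first category, which uses Remark~\ref{rm1} to place $G_\alpha \setminus \overset{\circ}{F_\alpha}$ inside $\partial \overset{\circ}{F_\alpha} \cup A$ when $\overset{\circ}{F_\alpha} \neq \emptyset$, together with a symmetric argument in the degenerate case $\overset{\circ}{F_\alpha} = \emptyset$. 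The double-inclusion approach via Lemma~\ref{lm2} sidesteps this minor case analysis, though the Lemma~\ref{lm16} route is probably what motivates the placement of that lemma immediately before this theorem.
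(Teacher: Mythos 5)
Your double-inclusion argument is correct, and it is genuinely simpler than the proof in the paper. You apply Lemma \ref{lm2} twice: with $U:=\overset{\circ }{F_{\alpha }}$ open, $C:=G_{\alpha }$ convex and $A:=\{f\neq g\}$ of first category, the inclusion $U\backslash A\subset G_{\alpha }$ upgrades to $U\subset G_{\alpha }$, hence $U\subset \overset{\circ }{G_{\alpha }}$ because $U$ is open; swapping $f$ and $g$ gives the reverse inclusion, and the empty-interior case is absorbed automatically since the inclusion is then vacuous. The paper instead aims for the equality in a single stroke: assuming $\overset{\circ }{F_{\alpha }}\neq \emptyset $, it writes $G_{\alpha }=(\overset{\circ }{F_{\alpha }}\backslash A_{1})\cup A_{2}$ with $A_{1},A_{2}$ of first category and $A_{2}\cap \overset{\circ }{F_{\alpha }}=\emptyset $ (this is where Remark \ref{rm1} and the identity $\partial F_{\alpha }=\partial \overset{\circ }{F_{\alpha }}$ enter, to control $F_{\alpha }\backslash \overset{\circ }{F_{\alpha }}$ by a nowhere dense set), and then invokes Lemma \ref{lm16} to get $\overset{\circ }{G_{\alpha }}=\overset{\circ }{F_{\alpha }}$ directly, finishing with the same symmetry step for the case $\overset{\circ }{G_{\alpha }}\neq \emptyset $. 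Your aside correctly identifies this as the route motivating the placement of Lemma \ref{lm16}; what the paper's version buys is the more precise structural statement of Lemma \ref{lm16} itself (that a convex set of the form $(U\backslash A_{1})\cup A_{2}$ forces $A_{1}\cap U=\emptyset $), but for the equality of interiors asserted in Theorem \ref{th17} your two applications of Lemma \ref{lm2} suffice and avoid the decomposition bookkeeping entirely.
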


\begin{proof}
We give the proof for $\overset{\circ }{F_{\alpha }}.$ Call $G_{\alpha }$
the set $\{x\in X:g(x)<\alpha \}.$ Below, we prove that if $\overset{\circ }{
F_{\alpha }}\neq \emptyset ,$ then $G_{\alpha }=(\overset{\circ }{F_{\alpha }%
}\backslash A_{1})\cup A_{2}$ where $A_{1}$ and $A_{2}$ are of first
category and $A_{2}\cap \overset{\circ }{F_{\alpha }}=\emptyset .$ From
Lemma \ref{lm16}, this implies $\overset{\circ }{G_{\alpha }}=\overset{\circ 
}{F_{\alpha }}$ and, by exchanging the roles of $f$ and $g,$ it follows that 
$\overset{\circ }{G_{\alpha }}=\overset{\circ }{F_{\alpha }}$ also when $%
\overset{\circ }{G_{\alpha }}\neq \emptyset .$ But then, $\overset{\circ }{
G_{\alpha }}=\overset{\circ }{F_{\alpha }}$ irrespective of whether either
is nonempty.

First, since $g\sim _{\mathcal{T}}f,$ it is clear that $F_{\alpha
}=(G_{\alpha }\backslash B_{1})\cup A_{1}$ where $A_{1}$ and $B_{1}$ are of
first category, $B_{1}\subset G_{\alpha }$ and $A_{1}\cap G_{\alpha
}=\emptyset .$ For future use, note that $B_{1}\cap F_{\alpha }=\emptyset $
since $A_{1}\cap B_{1}\subset A_{1}\cap G_{\alpha }=\emptyset .$ Next, since 
$\overset{\circ }{F_{\alpha }}\neq \emptyset ,$ observe that $F_{\alpha }=%
\overset{\circ }{F_{\alpha }}\cup B_{2}$ where $B_{2}\subset \partial
F_{\alpha }=\partial \overline{F}_{\alpha }$ (Remark \ref{rm1}) is nowhere
dense. Obviously, $B_{2}\cap \overset{\circ }{F_{\alpha }}=\emptyset .$

As a result, $(G_{\alpha }\backslash B_{1})\cup A_{1}=\overset{\circ }{
F_{\alpha }}\cup B_{2},$ both sides being $F_{\alpha }.$ Since $A_{1}\cap
(G_{\alpha }\backslash B_{1})\subset A_{1}\cap G_{\alpha }=\emptyset ,$ we
may take out $A_{1}$ from both sides without removing any point of $%
G_{\alpha }\backslash B_{1}$ and so $G_{\alpha }\backslash B_{1}=(\overset{
\circ }{F_{\alpha }}\cup B_{2})\backslash A_{1}=(\overset{\circ }{F_{\alpha }%
}\backslash A_{1})\cup (B_{2}\backslash A_{1}).$ Now, since $B_{1}\subset
G_{\alpha },$ it follows that $G_{\alpha }=(\overset{\circ }{F_{\alpha }}
\backslash A_{1})\cup A_{2}$ where $A_{2}:=(B_{2}\backslash A_{1})\cup
B_{1}. $ Both $A_{1}$ and $A_{2}$ are of first category and $A_{2}\cap 
\overset{\circ }{F_{\alpha }}=\emptyset $ since $B_{2}\cap \overset{\circ }{
F_{\alpha }}=\emptyset $ and $B_{1}\cap F_{\alpha }=\emptyset .$
\end{proof}

If ``second category'' is replaced by ``nonempty interior'' in condition
(ii) ((ii')) of Lemma \ref{lm1}, the modified condition is exactly the
assumption that the function $f$ of interest is what is called \emph{\ lower
quasicontinuous }(\emph{upper quasicontinuous}). See \cite{EwLi83}, \cite
{Ne88}, \cite{Sa93}, among others.

The stronger requirement that for every open subset $U\subset X$ and every
open subset $\Omega \subset \Bbb{R}$ such that $f(U)\cap \Omega \neq
\emptyset ,$ the set $U\cap f^{-1}(\Omega )$ has nonempty interior is called 
\emph{quasicontinuity,} a concept introduced by Kempisty \cite{Ke32} in
1932, which has been vigorously revisited in recent past. It has nothing to
do with its namesake occasionally used in convex analysis (\cite{JoLa71}, 
\cite{MoVo97}) and makes sense when the target space $\Bbb{R}$ is replaced
by any topological space $Y.$

A well known shorter equivalent definition is that $f$ is quasicontinuous if
and only if, for every open subset $\Omega \subset \Bbb{R},$ the interior of 
$f^{-1}(\Omega )$ is dense in $f^{-1}(\Omega ).$ Shi \textit{et al.} \cite
{ShZhZh95} used this definition to rediscover quasicontinuous functions six
decades later under the name ``robust functions'', but this terminology is
not prevalent.

Every continuous function is quasicontinuous but quasicontinuity and
semicontinuity are very different properties: With $X=\Bbb{R},$ the function 
$f=\chi _{\Bbb{R}\backslash \{0\}}$ is lsc but not quasicontinuous, whereas $%
f(x)=\sin (x^{-1})$ if $x\neq 0$ and $f(0)=0$ is quasicontinuous but neither
lsc nor usc.

At any rate, the resemblance noted in Lemma \ref{lm1} raises the natural
question whether a real-valued quasiconvex function is quasicontinuous. The
answer is positive in the usc case, as we now show. The Baire property is
not needed.

\begin{theorem}
\label{th18}Let $X$ be a tvs. If $f:X\rightarrow \Bbb{R}$ is quasiconvex and
usc, then $f$ is quasicontinuous.
\end{theorem}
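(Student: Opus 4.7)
The plan is to work with the standard pointwise reformulation of quasicontinuity: given $x_{0}\in X$, an open neighborhood $U$ of $x_{0}$, and $\varepsilon >0$, I will produce a nonempty open $W\subset U$ with $f(W)\subset (f(x_{0})-\varepsilon ,f(x_{0})+\varepsilon )$. Since $f$ is usc, $V_{1}:=U\cap \{f<f(x_{0})+\varepsilon \}$ is an open neighborhood of $x_{0}$ on which the upper estimate already holds, so the problem reduces to locating a nonempty open subset of $V_{1}$ on which $f>f(x_{0})-\varepsilon $.

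I will split into two cases depending on whether some sublevel set $F_{\alpha }^{\prime }$ with $\alpha \in (f(x_{0})-\varepsilon ,f(x_{0}))$ has nonempty interior. In the first case, I fix such an $\alpha $ and set $C:=\overline{F_{\alpha }^{\prime }}$; by Remark \ref{rm1}, $\overset{\circ }{C}=\overset{\circ }{F_{\alpha }^{\prime }}$. Since $f(x_{0})>\alpha $ gives $x_{0}\notin F_{\alpha }^{\prime }$, a fortiori $x_{0}\notin \overset{\circ }{C}$, so the open set $V_{1}\ni x_{0}$ cannot be contained in the closed set $C$ (else $V_{1}\subset \overset{\circ }{C}$). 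Consequently $W:=V_{1}\setminus C$ is a nonempty open subset of $V_{1}$ on which $f>\alpha >f(x_{0})-\varepsilon $.

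In the second case, $\overset{\circ }{F_{\alpha }^{\prime }}=\emptyset $ for every $\alpha \in (f(x_{0})-\varepsilon ,f(x_{0}))$. The key observation is that upper semicontinuity makes $F_{\alpha }=\{f<\alpha \}$ open, and $F_{\alpha }\subset F_{\alpha }^{\prime }$ then forces $F_{\alpha }\subset \overset{\circ }{F_{\alpha }^{\prime }}=\emptyset $. Thus $f\geq \alpha $ on all of $X$ for every such $\alpha $; letting $\alpha \nearrow f(x_{0})$ gives $f\geq f(x_{0})$ globally, whence $f(V_{1})\subset \lbrack f(x_{0}),f(x_{0})+\varepsilon )$ and $W:=V_{1}$ already does the job.

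The main obstacle I anticipate is the subcase in which $F_{\alpha }^{\prime }$ is convex, has empty interior, yet is dense in $X$ (a dense hyperplane is the canonical example): there $V_{1}\setminus F_{\alpha }^{\prime }$ need not be open and density blocks any naive localization. The dichotomy above sidesteps this pitfall: whenever $\overset{\circ }{F_{\alpha }^{\prime }}\neq \emptyset $, Remark \ref{rm1} legitimizes replacing $F_{\alpha }^{\prime }$ by its closure $C$; otherwise, the usc identity for $F_{\alpha }$ forces $f(x_{0})$ to be a global minimum of $f$ and the unadulterated $V_{1}$ succeeds. No Baire hypothesis is used, in agreement with the statement.
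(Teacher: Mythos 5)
Your proof is correct, and it reaches the conclusion by a genuinely different route from the paper's. The paper works with the global form of quasicontinuity (the interior of $f^{-1}(\Omega )$ is dense in $f^{-1}(\Omega )$), reduces to $\Omega =(\alpha ,\beta )$, writes $f^{-1}(\Omega )=F_{\beta }\backslash F_{\alpha }^{\prime }$ with interior $F_{\beta }\backslash \overline{F}_{\alpha }^{\prime }$, and rests the whole argument on the identity ``interior of $\overline{F}_{\alpha }^{\prime }$ equals $\overset{\circ }{F_{\alpha }^{\prime }}$,'' proved by writing $F_{\alpha }^{\prime }=\cap _{\gamma >\alpha }F_{\gamma }$ and applying Remark \ref{rm1} to each open convex $F_{\gamma }$. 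You argue pointwise instead and split on whether $\overset{\circ }{F_{\alpha }^{\prime }}$ is empty: if not, a single application of Remark \ref{rm1} to $F_{\alpha }^{\prime }$ shows that $x_{0}$ is not interior to the closed set $C=\overline{F_{\alpha }^{\prime }}$, so $V_{1}\backslash C$ is nonempty, open, and sits where $f>\alpha $; if so, upper semicontinuity forces the open set $F_{\alpha }\subset F_{\alpha }^{\prime }$ to be empty for every $\alpha $ in the interval, so $f(x_{0})=\inf_{X}f$ and $V_{1}$ itself works. Both arguments use usc and quasiconvexity at exactly the same two pressure points (openness of strict sublevel sets, Remark \ref{rm1}), but your dichotomy sidesteps, rather than proves, the paper's structural identity in the delicate case $\overset{\circ }{F_{\alpha }^{\prime }}=\emptyset $ --- precisely the ``dense convex set with empty interior'' pitfall you flag; the trade-off is that the paper's identity is a reusable fact about usc quasiconvex level sets, whereas your Case 2 conclusion ($f\geq f(x_{0})$ everywhere) is a degenerate-case shortcut. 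The only step you leave implicit is the equivalence of the pointwise formulation with Kempisty's definition as stated in the paper; for real-valued functions this is standard and routine, so it is not a gap, though a one-line verification would make the argument fully self-contained.
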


\begin{proof}
Let $\Omega \subset \Bbb{R}$ be open. It must be shown that the interior of $%
f^{-1}(\Omega )$ is dense in $f^{-1}(\Omega ).$ A routine verification
reveals that this is true if and only if it is true when $\Omega =(\alpha
,\beta )$ is a finite interval such that $f^{-1}(\Omega )\neq \emptyset .$

It is plain that $f^{-1}(\Omega )=F_{\beta }\backslash F_{\alpha }^{^{\prime
}}.$ Since $f$ is usc and quasiconvex, $F_{\beta }$ is open and convex and,
by the openness of $F_{\beta },$ the interior of $f^{-1}(\Omega )$ is $%
F_{\beta }\backslash \overline{F}_{\alpha }^{\prime }.$ Let $W$ denote the
interior of $\overline{F}_{\alpha }^{\prime }.$ We begin by showing that $W=%
\overset{\circ }{F_{\alpha }^{\prime }}.$ Since this is obvious if $%
W=\emptyset ,$ we henceforth assume that $W\neq \emptyset .$ If $\dim
X=\infty ,$ it is generally false that a convex subset has the same interior
as its closure (this does \emph{not} follow from Remark \ref{rm1}), so a
proof is needed. That $f$ is usc is important.

First, $F_{\alpha }^{\prime }=\cap _{\gamma >\alpha }F_{\gamma },$ so that $%
\overline{F}_{\alpha }^{\prime }\subset \cap _{\gamma >\alpha }\overline{F}
_{\gamma }.$ It follows that $W\subset \overline{F}_{\gamma }$ for every $%
\gamma >\alpha .$ Furthermore, $F_{\gamma }$ (open since $f$ is usc) is the
interior of $\overline{F}_{\gamma }$ (Remark \ref{rm1}) and so $W\subset
F_{\gamma }.$ As a result, $W\subset \cap _{\gamma >\alpha }F_{\gamma
}=F_{\alpha }^{^{\prime }},$ so that $W$ is contained in $\overset{\circ }{
F_{\alpha }^{\prime }}.$ By definition of $W,$ the converse is trivial, so
that $W=\overset{\circ }{F_{\alpha }^{\prime }},$ as claimed.

If $x\in F_{\beta }\backslash F_{\alpha }^{^{\prime }}$ ($=f^{-1}(\Omega
)\neq \emptyset $ ), then $x\notin \overset{\circ }{F_{\alpha }^{\prime }}
=W. $ Since $W$ is the interior of $\overline{F}_{\alpha }^{\prime },$ every
open neighborhood of $x$ in $X$ contains a point not in $\overline{F}
_{\alpha }^{\prime }.$ In particular, if $U$ is an open neighborhood of $x$
in $X,$ then $U\cap F_{\beta }$ (open) contains a point $y\notin \overline{F}
_{\alpha }^{\prime }.$ Hence, $U$ contains a point of $F_{\beta }\backslash 
\overline{F}_{\alpha }^{\prime },$ so that $F_{\beta }\backslash \overline{F}
_{\alpha }^{\prime }$ is dense in $F_{\beta }\backslash F_{\alpha }^{\prime
}.$
\end{proof}

The example when $X=\Bbb{R},f(0)=-1$ and $f(x)=|x|$ when $x\neq 0$ shows
that $f$ need not be quasicontinuous if it is not usc.

\end{document}